\newtheorem{theorem}{Theorem}[section]
\newtheorem{proposition}[theorem]{Proposition}
\newtheorem{lemma}[theorem]{Lemma}
\newtheorem{corollary}[theorem]{Corollary}
\newtheorem{claim}{Claim}
\theoremstyle{definition}
\newtheorem{definition}[theorem]{Definition}
\newtheorem{exm}[theorem]{Example}
\theoremstyle{remark}
\newtheorem{remark}[theorem]{Remark}
\newcommand{\Lex}{\operatorname{Lex}}
\newcommand{\Min}{\operatorname{Min}}
\newcommand{\ob}{\operatorname{Obj}}
\newcommand{\op}{\operatorname{op}}
\newcommand{\obj}{\operatorname{Obj}}
\newcommand{\dom}{\operatorname{dom}}
\newcommand{\id}{\operatorname{id}}
\newcommand{\Spec}{\operatorname{Spec}}
\newcommand{\Id}{\operatorname{Id}}
\renewcommand{\Im}{\operatorname{Im}}
\newcommand{\Rad}{\operatorname{Rad}}
\newcommand{\alg}{\ell\cat{G}^\text{Ab}}
\newcommand{\supp}{\operatorname{supp}}
\newcommand{\lex}{\times_{\operatorname{lex}}}
\newcommand{\BB}{\mathcal{B}\!\operatorname{oole}}
\renewcommand{\O}{\mathbb{O}}
\newcommand{\E}{\mathbb{E}}
\newcommand{\Set}{\mathcal{S}\!\operatorname{et}}
\newcommand{\fwp}{\mathscr{F}}
\newcommand{\MV}{\mathcal{MV}}
\newcommand{\MVl}{\mathcal{MV}^{\operatorname{lr}}}
\newcommand{\cat}{\mathcal}
\newcommand{\B}{\operatorname{B}}
\newcommand{\sk}{\operatorname{Sk}}
\newcommand{\Max}{\operatorname{Max}}
\newcommand{\R}{\mathbb{R}}
\newcommand{\N}{\mathbb{N}}
\newcommand{\C}{\mathcal{C}}
\newcommand{\Z}{\mathbb{Z}}
\renewcommand{\wp}{\mathscr{P}}
\newcommand{\TMV}{{}^{\operatorname{MV}}\!\mathcal{T}\!\!\operatorname{op}}
\renewcommand{\Top}{\mathcal{T}\!\!\operatorname{op}}
\newcommand{\Fuz}{\mathcal{F}\!\!\operatorname{uz}}
\newcommand{\LFuz}{\mathcal{LF}\!\!\operatorname{uz}}
\newcommand{\LTMV}{{}^{\operatorname{LMV}}\!\mathcal{T}\!\!\operatorname{op}}
\renewcommand{\phi}{\varphi}
\newcommand{\cou}{{^\leftarrow}}
\newcommand{\fcou}{^{{\rotatebox[origin=c]{180}{$\rightsquigarrow$}}}}
\newcommand{\restr}{\upharpoonright}
\newcommand{\la}{\langle}
\newcommand{\ra}{\rangle}
\newcommand{\lto}{\longrightarrow}
\newcommand{\To}{\Longrightarrow}
\newcommand{\lmapsto}{\longmapsto}
\newcommand{\0}{\mathbf{0}}
\renewcommand{\1}{\mathbf{1}}
\def\amslatex\slash{{\protect\AmS-\protect\LaTeX}}
\begin{document}

\title{MV-algebras as Sheaves of $\ell$-Groups on Fuzzy Topological Spaces}

\author{\renewcommand{\thefootnote}{\arabic{footnote}}
\rm Luz Victoria De La Pava\footnotemark[1] \and
\renewcommand{\thefootnote}{\arabic{footnote}}
\rm Ciro Russo\footnotemark[2]}

\maketitle
\footnotetext[1]{Departamento de Matem\'aticas, Universidad del Valle -- Cali, Colombia. \\ {\tt victoria.delapava@correounivalle.edu.co}}
\footnotetext[2]{Departamento de Matem\'atica, Universidade Federal da Bahia -- Salvador, Bahia, Brazil. \\ {\tt ciro.russo@ufba.br}}
\today

\begin{abstract}
We introduce the concept of fuzzy sheaf as a natural generalisation of a sheaf over a topological space in the context of fuzzy topologies. Then we prove a representation for a class of MV-algebras, that we called ``locally retractive'', in which the representing object is an MV-sheaf of lattice-ordered Abelian groups, namely, a fuzzy sheaf in which the base (fuzzy) topological space is an MV-topological space and the stalks are Abelian $\ell$-groups. Last, we show that any MV-algebra is embeddable in a locally retractive algebra and, therefore, in the algebra of global sections of one of such sheaves.
\end{abstract}

\section{Introduction}
\label{intro}

MV-topological spaces are fuzzy topological spaces in which \L ukasiewicz t-norm and t-conorm paly the role of strong intersection and union of fuzzy sets. They were introduced by the second author \cite{rus2} with the aim of extending Stone duality to semisimple MV-algebras. Many basic notions and results of general topology have been succefully extended to MV-topologies in \cite{rus2} and \cite{dlpr}, and the results obtained so far indicate that MV-topological spaces constitute a pretty well-behaved fuzzy generalization of classical topological spaces. %On the algebraic side, the class of algebras which play for MV-topologies the role that Boolean algebras play for classical topologies is the one of \emph{limit cut complete} (\emph{lcc}, for short) MV-algebras. It is worth noticing that lcc MV-algebras form a reflective subcategory of the category of MV-algebras and a completion of the one of semisimple MV-algebras.

In this paper, we extend the concept of sheaf to fuzzy topological spaces with particular emphasis to the class of MV-topological spaces; then we represent a class of MV-algebras as \emph{MV-sheaves} of lattice-ordered Abelian groups. More precisely, we show that every \emph{locally retractive} MV-algebra, namely, every MV-algebra whose quotients on certain ideals have retractive radicals, is isomorphic to the algebra of global sections of an MV-sheaf of $\ell$-groups. Our representation is strongly connected to Filipoiu and Georgescu's sheaf representation for MV-algebras \cite{G-F}. Indeed, from a strictly algebraic viewpoint, we use essentially the same tool, that is, the fact that any MV-algebra $A$ is subdirectly embeddable in the product of a family of local MV-algebras.

However, our representation differs from the one in \cite{G-F} in the way the ``information is encoded''. In Filipoiu and Georgescu's representation, each MV-algebra is obtained as an algebra of global sections of a (classical) sheaf over the maximal spectrum of the algebra and whose stalks are local MV-algebras. So, grossly speaking, we can say that each element of the algebra is represented as an open set of maximal ideals, carrying just the Boolean information, with an element of a local MV-algebra attached to each of its points, the latter encoding the ``non-idempotent'' part. In our representation, the base space is the maximal MV-spectrum (see \cite{rus2}) and is in charge of encoding the whole semisimple skeleton of the given algebra, while the stalks only carry the non-semisimple (or infinitesimal) information of the elements of the algebra. Therefore, using the same description, each element of the algebra is a fuzzy open set along with $\ell$-group elements attached to its (fuzzy) points; the fuzzy points of the open set form the semisimple part and the group elements represent exclusively the infinitesimal one.

As already stated, such an MV-sheaf representation is given for locally retractive MV-algebras, which seems to be a pretty strong limit, but it is not really so, since we also prove that any MV-algebra can be embedded in a locally retractive algebra (Corollary \ref{retremb}), and therefore in the algebra of global sections of an MV-sheaf of Abelian $\ell$-groups (Corollary \ref{sheafemb}). In this respect, Theorem \ref{mvrthm} plays a key role, together with \cite[Theorem 4.5]{dinespger}.

Throughout the paper, unless otherwise specified, we refer the reader to \cite{mvbook} for any definition about MV-algebras not explicitly reported here.

\section{MV-algebra ideals and lexicographic MV-algebras}
\label{mvalg}

In this preliminary section we shall recall some notions and results on MV-algebras, mainly from \cite{ferraleti} and \cite{dfl}. For all the very basic facts on MV-algebras, we refer the reader to \cite{mvbook}

Given an MV-algebra $\la A, \oplus, ^*, 0\ra$, an \emph{ideal} of $A$ is a downward closed submonoid of $\la A, \oplus, 0\ra$. It is well-known that congruences and ideals of MV-algebras are in bijective correspondence, namely, that each ideal of an algebra $A$ is the class of $0$ for exactly one congruence of $A$. \emph{Maximal ideals} are ideals which are maximal w.r.t. the inclusion relation in the set of all ideals of $A$, while a \emph{prime ideal} is any ideal $I$ such that, for all $a, b \in A$, if $a \wedge b \in I$, then at least one of the two elements of the algebra is in $I$. $\Spec A$ and $\Max A$ denote the sets of, respectively, prime and maximal ideals of $A$. Note that $\Max A \subseteq \Spec A$, and the equality does not hold in general. The \emph{radical} of $A$ is the intersection of all maximal ideals; it is denoted by $\Rad A$. We also denote by $\Min A$ the set of minimal prime ideals of $A$.

An MV-algebra is called \emph{local} if it has a unique maximal ideal (which, consequently, is also the radical). All totally ordered MV-algebras (MV-chains) are local. All the quotients of an MV-algebra over a prime ideal are chains, and therefore local. An ideal $I$ of $A$ is called \emph{primary} if $A/I$ is a local algebra, hence, all prime ideals are primary, while the converse is not true in general.

Let $A$ be an MV-algebra and $P$ a prime ideal of $A$. The set
\begin{equation}\label{O_P}
O_P=\bigcap\{Q\in \Min A \mid Q\subseteq P\}.
\end{equation}
is obviously an ideal of $A$. It is immediate to verify that $O_P = \bigcap\{Q\in \Spec A \mid Q\subseteq P\}$, for all $P \in \Spec A$.

\begin{proposition}\cite{ferraleti}\label{O_P ch}
For each $P\in \Spec A$, $O_P=\bigcup\{a^\perp:a\notin P\}$, where $a^\perp=\{b\in A:a\wedge b=0\}.$
\end{proposition}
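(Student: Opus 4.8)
The plan is to prove the two inclusions separately, with $\bigcup\{a^\perp : a \notin P\} \subseteq O_P$ being the easy one and the reverse the substantial one. For the easy inclusion I would fix $a \notin P$ and $b \in a^\perp$, so that $a \wedge b = 0$, and let $Q$ be any prime ideal with $Q \subseteq P$. From $a \wedge b = 0 \in Q$ and the primeness of $Q$ one gets $a \in Q$ or $b \in Q$; since $Q \subseteq P$ and $a \notin P$, the first alternative is impossible, whence $b \in Q$. As $Q$ was an arbitrary prime below $P$ and $O_P = \bigcap\{Q \in \Spec A : Q \subseteq P\}$, this yields $b \in O_P$.

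For the reverse inclusion I would argue by contraposition: assuming $b \notin \bigcup\{a^\perp : a \notin P\}$, I will produce a prime $Q \subseteq P$ with $b \notin Q$, which forces $b \notin O_P$ because $O_P \subseteq Q$ for every prime $Q \subseteq P$. The hypothesis says exactly that $a \wedge b = 0$ implies $a \in P$ for every $a$, that is, $b^\perp \subseteq P$, where $b^\perp = \{a \in A : a \wedge b = 0\}$; recall from \cite{ferraleti} that $b^\perp$ is an ideal of $A$. One may assume $b \neq 0$, since otherwise $b$ lies in every ideal and the claim is vacuous. I would then consider the family $\mathcal{F} = \{R \in \Spec A : b^\perp \subseteq R \subseteq P\}$, which is nonempty as $P \in \mathcal{F}$. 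The intersection of any chain in $\mathcal{F}$ is again a prime ideal lying between $b^\perp$ and $P$, so Zorn's Lemma provides a minimal element $Q$ of $\mathcal{F}$; by construction $Q$ is minimal among the primes containing $b^\perp$, i.e. a minimal prime of the quotient $A/b^\perp$.

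It then remains to check $b \notin Q$. Here I would invoke the standard characterization of minimal primes over an ideal: since $Q$ is minimal among primes containing $b^\perp$, for every $x \in Q$ there is some $y \notin Q$ with $x \wedge y \in b^\perp$. Were $b \in Q$, this would give $y \notin Q$ with $b \wedge y \in b^\perp$; but $b \wedge y \in b^\perp$ means $(b \wedge y) \wedge b = 0$, i.e. $b \wedge y = 0$, so $y \in b^\perp \subseteq Q$, contradicting $y \notin Q$. Hence $b \notin Q$, and since $Q \subseteq P$ forces $O_P \subseteq Q$, we conclude $b \notin O_P$, which finishes the contrapositive.

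I expect the main obstacle to be this last step — extracting, from the minimality of $Q$ over $b^\perp$, an element $y \notin Q$ witnessing $b \wedge y = 0$ — together with the supporting fact that the polar $a^\perp$ is an ideal. Both hinge on the behaviour of polars and the minimal‑prime characterization in MV‑algebras recalled in \cite{ferraleti}, so I anticipate the real content to lie in setting up the correct Zorn argument rather than in any fresh computation.
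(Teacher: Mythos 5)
The paper does not prove this proposition at all: it is quoted from \cite{ferraleti} without proof, so there is no in-paper argument to compare against. Judged on its own, your proof is correct and is the standard one: the inclusion $\bigcup\{a^\perp: a\notin P\}\subseteq O_P$ follows exactly as you say from primeness of each $Q\subseteq P$, and for the converse your Zorn construction of a prime $Q$ with $b^\perp\subseteq Q\subseteq P$ minimal over $b^\perp$ (note that minimality in your family $\mathcal{F}$ does give minimality among all primes containing $b^\perp$, since any prime below $Q$ is automatically below $P$), together with the observation that $b\in Q$ would yield a witness $y\notin Q$ with $b\wedge y=0$ and hence $y\in b^\perp\subseteq Q$, closes the argument. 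The only external inputs are that polars are ideals and the characterization of minimal primes over an ideal (obtained by passing to $A/b^\perp$), both standard facts available in \cite{ferraleti}.
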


\begin{proposition}\cite{ferraleti}\label{O_P is primary}
  For each $P\in\Spec A$, the ideal $O_P$ is primary.
\end{proposition}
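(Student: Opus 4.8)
The plan is to unwind the definition: the ideal $O_P$ is primary exactly when the quotient $A/O_P$ is local, i.e.\ has a unique maximal ideal. By the ideal--quotient correspondence, the maximal ideals of $A/O_P$ are the images of the maximal ideals of $A$ that contain $O_P$, so it suffices to prove that there is exactly one maximal ideal of $A$ lying above $O_P$. Here I would first record the standard fact that the prime ideals of $A$ containing a fixed prime form a chain (the ideals containing $P$ correspond to the ideals of the MV-chain $A/P$, and the ideals of a chain are totally ordered by inclusion); in particular there is a unique maximal ideal $M$ with $P\subseteq M$. Thus the whole statement reduces to the single claim that \emph{every maximal ideal of $A$ containing $O_P$ also contains $P$} --- equivalently, that every prime ideal above $O_P$ is comparable with $P$ --- since then $M$ is forced to be the only maximal ideal over $O_P$, whence $A/O_P$ is local.

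For this comparability claim I would argue through minimal primes. Writing $O_P=\bigcap\{Q\in\Min A\mid Q\subseteq P\}$ and invoking the root-system structure of $\Spec A$ (the primes above any given prime form a chain), it is enough to show that a prime $Q\supseteq O_P$ must contain one of the minimal primes $Q_i\subseteq P$: once $Q\supseteq Q_i$ and $P\supseteq Q_i$, both $Q$ and $P$ lie in the chain of primes above $Q_i$ and are therefore comparable. To produce such a containment I would use Proposition~\ref{O_P ch}: assuming $P\not\subseteq Q$, pick $b\in P\setminus Q$ and, for an arbitrary $a\in Q$, try to force $a\in P$ by exploiting that the hypothesis $O_P=\bigcup_{a\notin P}a^\perp\subseteq Q$ places, for every $c\notin P$, the whole annihilator $c^\perp$ inside $Q$; the goal is to convert the available relations between $a$, $b$, and $P$ into a disjointness statement that feeds into one of these annihilators.

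The main obstacle, on which the argument genuinely turns, is the passage from a relation holding \emph{modulo an ideal} to a genuine \emph{disjointness} $x\wedge y=0$ in $A$ itself: the natural candidate witnesses (such as $b$, or truncated differences like $a\odot b^{*}$) are only disjoint modulo $P$ or modulo $M$, and disjointness does not lift along quotient maps. Surmounting this is precisely what forces the essential use of the annihilator description of Proposition~\ref{O_P ch}, together with a supporting lemma permitting one to choose, for an element that is ``large modulo $P$'', a companion element that is truly disjoint from a suitable multiple or power of it. I expect the careful construction of this disjoint witness to be the technical heart of the proof, with the reduction steps above being essentially formal once the comparability claim is in hand.
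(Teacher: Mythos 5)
Your reduction is sound: primality of $O_P$ amounts to uniqueness of the maximal ideal above $O_P$; the primes containing the prime $P$ form a chain (they correspond to ideals of the MV-chain $A/P$); and everything comes down to showing that a prime $Q\supseteq O_P$ cannot be incomparable with $P$. (The paper itself gives no proof here --- the proposition is quoted from the literature --- so the only question is whether your argument closes.) It does not: the step you call ``the technical heart'' is precisely the step you leave undone, and your diagnosis of why it is hard is mistaken. You worry that candidates such as $a\odot b^*$ are ``only disjoint modulo $P$ or modulo $M$'' and that disjointness does not lift along quotient maps; but $(a\ominus b)\wedge(b\ominus a)=(a\odot b^*)\wedge(b\odot a^*)=\0$ is an identity valid in \emph{every} MV-algebra, so the witness is disjoint in $A$ outright and no lifting is required. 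Concretely: suppose $Q\supseteq O_P$ is prime and incomparable with $P$, and pick $a\in Q\setminus P$ and $b\in P\setminus Q$. Since $b\in P$ and $a\notin P$, the image of $a\ominus b$ in $A/P$ equals that of $a$ and is nonzero, so $a\ominus b\notin P$ and Proposition~\ref{O_P ch} yields $(a\ominus b)^\perp\subseteq O_P\subseteq Q$. As $b\ominus a\in(a\ominus b)^\perp$, we get $b\ominus a\in Q$, hence $b\le a\oplus(b\ominus a)\in Q$, contradicting $b\notin Q$. With comparability established, any maximal ideal over $O_P$ either equals $P$ or contains $P$, and since the primes above $P$ form a chain there is exactly one such maximal ideal; thus $A/O_P$ is local.

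A secondary point: your intermediate reduction to ``$Q$ contains one of the minimal primes $Q_i\subseteq P$'' is not a safe stepping stone. A prime containing an \emph{infinite} intersection of ideals need not contain one of the intersectands (the usual prime-avoidance argument only works for finite intersections), so that sub-claim is no easier than the proposition itself, and it is in any case unnecessary once you argue the comparability of $Q$ and $P$ directly as above.
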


\begin{remark}\label{subemb}
Every prime ideal of an MV-algebra $A$ is contained in a maximal ideal, hence $\bigcap_{M \in \Max A} O_M = \bigcap \Spec A = \{0\}$. Then, by Universal Algebra, there exists a subdirect embedding of $A$ into $\prod_{M \in \Max A} A/O_M$. Such an embedding will be the main algebraic tool of our representation.
\end{remark}

We recall that a \emph{partially-ordered Abelian group} is an Abelian group $\left( G,+, -, 0\right)$ endowed with a partial order relation $\leq$ which is compatible with the sum. The positive cone $G_+$ of $G$ is the set $\{x \in G \mid 0 \leq x\}$, while the negative cone $G_-$ is $(-G_+)$, i.e., the set of all the elements of $G$ which are $\leq 0$. When the order relation is total, $G$ is called a \emph{totally-ordered Abelian group} ($o$-group for short), and if the order of $G$ is a lattice order the group is called a \emph{lattice-ordered Abelian group} ($\ell$-group henceforth). An element $u \in G$ is a \emph{strong (order) unit} if $u \geq 0$ and, for every $x \in G$ there is a natural number $n$ such that $x \leq nu$. An $\ell$-group (respectively: an $o$-group) $G$ with a strong unit $u$ is called a \emph{unital $\ell$-group} (\emph{$\ell u$-group}) (resp.: \emph{unital $o$-group}, \emph{$ou$-group}) and is usually denoted by $\left( G, u\right)$.

\begin{definition}\label{retrlexdef}
An ideal $I$ of an MV-algebra $A$ is called \emph{retractive} if the natural projection $A \to A/I$ is a retraction. $I$ is called \emph{lexicographic} if the following hold:
\begin{enumerate}[(LMV1)]
  \item $I\neq\{0\}$,
  \item $I$ is strict, i.e., $\forall a,b \in A (a/I < b/I \To a < b)$,
  \item $I$ is retractive,
  \item $I$ is prime,
  \item $\rho\leq x\leq\rho^*$, for any $\rho\in I$ and any $x\in A\setminus\langle I\rangle$.
\end{enumerate}
The set of all lexicographic ideals of $A$ is denoted by $\Lex\Id(A)$.
\end{definition}

\begin{definition}
An MV-algebra $A$ is called \emph{lexicographic} if $\Lex\Id(A)\neq\emptyset.$
\end{definition}
\begin{theorem}{\cite[Theorem 4.1]{dfl}}\label{reprthlex}
The following are equivalent:
\begin{enumerate}[(a)]
\item $A$ is a lexicographic MV-algebra,
\item there exists an ou-group $(H,u)$ and a non-trivial $\ell$-group $G$ such that $$A\cong\Gamma(H\lex G,(u,0)).$$
\end{enumerate}
\end{theorem}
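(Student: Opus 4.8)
The plan is to establish the two implications separately. The implication (b)$\Rightarrow$(a) is a direct verification that the ``purely infinitesimal'' ideal of $\Gamma(H \lex G, (u,0))$ is lexicographic, while (a)$\Rightarrow$(b) reconstructs the pair $(H,u)$, the group $G$, and the isomorphism from a chosen lexicographic ideal.

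For (b)$\Rightarrow$(a), assume $A = \Gamma(H \lex G, (u,0))$ with $(H,u)$ an $ou$-group and $G$ a non-trivial $\ell$-group, and set $I \bydef \{(0,g) \mid g \in G_+\}$. This $I$ is an ideal: it is closed under $\oplus$ because $(0,g_1)\oplus(0,g_2) = (0, g_1+g_2)$ (no truncation occurs, since $0 < u$), and it is downward closed because any element of $A$ below some $(0,g)$ must have first coordinate $0$. I would then read off the five axioms. The projection $(h,g) \mapsto h$ identifies $A/I$ with the chain $\Gamma(H,u)$, which gives primality (LMV4); since $a/I < b/I$ means $h_a < h_b$, and $h_a < h_b$ forces $(h_a,g_a) < (h_b,g_b)$ in the lexicographic order, this also gives strictness (LMV2); $h \mapsto (h,0)$ is a homomorphic section of the projection, giving retractivity (LMV3); $I \neq \{0\}$ because $G$ is non-trivial (LMV1); and, since $\langle I\rangle = I \cup I^*$ consists exactly of the elements with $h \in \{0,u\}$, the complementary elements have $0 < h < u$, for which the lexicographic order yields $(0,g_0) \leq (h,g) \leq (u,-g_0)$, i.e. (LMV5).

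For (a)$\Rightarrow$(b), fix a lexicographic ideal $I$. By primality (LMV4) the quotient $A/I$ is an MV-chain, so by Mundici's equivalence (see \cite{mvbook}) $A/I \cong \Gamma(H,u)$ for a totally ordered $ou$-group $(H,u)$, unique up to isomorphism; write $\phi \colon A/I \to \Gamma(H,u)$ for this isomorphism, $\pi \colon A \to A/I$ for the projection, and let $s \colon A/I \to A$ be the homomorphic section furnished by retractivity (LMV3). To produce $G$, I would look at $I$ itself: it is a sublattice of $A$ (downward closed, and $\rho \vee \sigma = \rho \oplus (\sigma \ominus \rho) \in I$), and on its elements $\oplus$ is cancellative and never truncates, so $(I, \oplus, 0, \leq)$ is a cancellative lattice-ordered commutative monoid whose order is algebraic. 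Its group of differences $G = I - I$ is then a non-trivial abelian $\ell$-group with positive cone $G_+ \cong I$; equivalently, $G$ is the abelian $\ell$-group associated by Di Nola--Lettieri's equivalence to the perfect subalgebra $\langle I\rangle = I \cup I^*$, whose radical is $I$.

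It remains to assemble the map. For $a \in A$ both $a \ominus s(\pi(a))$ and $s(\pi(a)) \ominus a$ lie in $I$ (they vanish under $\pi$) and have meet $0$, so I set
\[ \Phi(a) \bydef \bigl(\phi(\pi(a)),\; (a \ominus s(\pi(a))) - (s(\pi(a)) \ominus a)\bigr), \]
reading the first coordinate in $[0,u] \subseteq H$ and the second in $G$. I would check that $\Phi$ lands in $\Gamma(H \lex G,(u,0))$ and is an MV-homomorphism, that it is injective (if $\Phi(a)=\Phi(a')$ then $\pi(a)=\pi(a')$ and the positive/negative parts coincide, whence $a\wedge c = a'\wedge c$ and $a\vee c = a'\vee c$ for $c = s(\pi(a))$, so $a = a'$ by distributivity), and that it is surjective (given $(h,g)$, write $g = \rho - \sigma$ with $\rho,\sigma \in I$ of meet $0$ and check that $a = (s(\phi^{-1}(h)) \oplus \rho) \ominus \sigma$ is a preimage). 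The main obstacle will be the homomorphism property of $\Phi$: the behaviour of $\oplus$ must be analysed in the three cases $h_a + h_b < u$, $h_a + h_b > u$, and the boundary $h_a + h_b = u$, and it is exactly here that (LMV2) and (LMV5) are needed --- (LMV5) guarantees the ``lexicographic gap'' (an increment in the $H$-coordinate dominates any increment in $G$, so no spurious truncation occurs away from the boundary), while (LMV2) guarantees that the order of $A$ refines the lexicographic order. The secondary technical point is the cancellativity of $(I,\oplus)$, which makes its group of differences a genuine $\ell$-group; this follows from the standard structure theory of the radical of an MV-algebra.
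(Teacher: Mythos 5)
Your proposal is correct and follows essentially the same route as the paper's (sketched) proof: your ideal $\{(0,g)\mid g\in G_+\}$ for (b)$\Rightarrow$(a) is exactly what the cited Proposition 3.1 of \cite{dfl} verifies, and your map $\Phi(a)=\bigl(\phi(\pi(a)),\,(a\ominus s(\pi(a)))-(s(\pi(a))\ominus a)\bigr)$ coincides with the paper's $f_I(a)=\left(\zeta_I(s_a),\eta_I(\varepsilon_a)-\eta_I(\tau_a)\right)$, since $\varepsilon_a=a\odot s_a^*=a\ominus s_a$ and $\tau_a=a^*\odot s_a=s_a\ominus a$, with $G$ obtained in both cases as the $\ell$-group of differences of the cancellative lattice-ordered monoid $I$ (equivalently, $\Delta^{-1}(\langle I\rangle)$). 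The only difference is that you spell out the case analysis and verifications that the paper delegates to \cite{dfl}.
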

This representation theorem says that the class of lexicographic MV-algebras is the largest class of MV-algebras which can be represented, via Mundici's functor $\Gamma$ (see \cite[Section 2.1]{mvbook} or \cite{mun}), as lexicographic products of \emph{ou}-groups and non-trivial $\ell$-groups, with strong unit of the form $(u, 0)$. We recall here a sketch of the proof in order to provide the reader with some technical tools that will be used later.
\begin{proof}{(Sketch)}
\begin{itemize}
\item[$\Rightarrow)$] It follows from \cite[Proposition 3.1]{dfl}.
\item [$\Leftarrow)$] Let $A$ be a lexicographic MV-algebra and $I$ a lexicographic ideal of $A$. We have the following:
\begin{itemize}
\item Let $\delta_I$ be the retraction of the canonical projection $\pi_I:A\lto A/I$.
\item Let $S_I=\delta_I(A/I)$ the MV-subalgebra of $A$ which is isomorphic to $A/I$.
\item For any $a\in A$, set $s_a=\delta_I(\pi_I(a))$ as the unique element of $S_I$ such that $[s_a]_I = [a]_I$.
\item Set $\varepsilon_a=a\odot s_a^*$ and $\tau_a=a^*\odot s_a$.
\item There exist an isomorphism of MV-algebras $\zeta_I: S_I \lto \Gamma(H,u)$ and an isomorphism of lattice-ordered monoids $\eta_I: I \lto G_+$.
\item Let $(H,u)\cong \Gamma^{-1}(A/I) \text{  and  }  G\cong \Delta^{-1}(\langle I\rangle)$
\end{itemize}

The function $f_I: A \lto \Gamma(H \lex G, (u,0))$ defined by
$$f_I(a)=\left(\zeta_I(s_a),\eta_I(\varepsilon_a)-\eta_I(\tau_a)\right), \text{ for any }a\in A$$ is an isomorphism of MV-algebras.
\end{itemize}
\end{proof}

\begin{corollary}\label{rad A es lex iff}
If $A$ is a lexicographic MV-algebra the following are equivalent:
\begin{enumerate}[(1)]
\item $\Rad A\in \Lex\Id(A)$,
\item there exists an $\ell u$-subgroup $(R',1)$ of $(\mathbb{R},1)$ and a non-trivial $\ell$-group $G$ such that $$A\cong\Gamma(R'\lex G,(1,0)).$$
\end{enumerate}
Moreover, if the above equivalent conditions are satisfied the $\ell u$-subgroup $(R',1)$ of $(\mathbb{R},1)$ and the $\ell$-group $G$ are uniquely determined, up to isomorphisms.
\end{corollary}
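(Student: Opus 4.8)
The plan is to obtain both implications by specialising the representation of Theorem~\ref{reprthlex} to the case where its distinguished lexicographic ideal is the radical, and then using the single structural fact that $A/\Rad A$ is semisimple for every MV-algebra.

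For $(1)\Rightarrow(2)$ I would run the construction from the proof of Theorem~\ref{reprthlex} with the specific lexicographic ideal $I=\Rad A$, which is allowed precisely by hypothesis. This produces an isomorphism $A\cong\Gamma(H\lex G,(u,0))$ in which $\Gamma(H,u)\cong A/\Rad A$ and $\eta_I$ identifies $\Rad A$ with $G_+$; moreover $G$ is non-trivial because $\Rad A\neq\{0\}$ by (LMV1). The key observation is that $A/\Rad A$ is semisimple, and here it is in addition a chain, since $\Rad A$ is prime by (LMV4). Hence the $ou$-group $(H,u)$ is Archimedean, and by H\"older's theorem it is isomorphic to an $\ell u$-subgroup $(R',1)$ of $(\mathbb{R},1)$ (equivalently, $\Gamma(H,u)\cong A/\Rad A$ is a subalgebra of $\Gamma(\mathbb{R},1)$); substituting $(R',1)$ for $(H,u)$ yields $A\cong\Gamma(R'\lex G,(1,0))$, which is $(2)$.

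For $(2)\Rightarrow(1)$ I would start from $A\cong\Gamma(R'\lex G,(1,0))$ and take $I$ to be the image of the distinguished lexicographic ideal, i.e.\ the ideal $\{(0,g):g\in G_+\}$ corresponding to the infinitesimal second component; by Theorem~\ref{reprthlex} this $I$ is lexicographic with $A/I\cong\Gamma(R',1)$. It then suffices to prove $I=\Rad A$. The inclusion $I\subseteq\Rad A$ holds because every element of the second component is infinitesimal with respect to the unit $(1,0)$. For the reverse inclusion, since $R'$ is a subgroup of $\mathbb{R}$ the group $(R',1)$ is Archimedean, so $\Gamma(R',1)\cong A/I$ is semisimple; the projection $A\to A/I$ therefore maps $\Rad A$ into $\Rad(A/I)=\{0\}$, whence $\Rad A\subseteq I$. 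Thus $\Rad A=I\in\Lex\Id(A)$.

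For the uniqueness clause I would observe that the argument just given shows that \emph{any} representation of the form $(2)$ has its distinguished ideal forced to equal $\Rad A$, so the decomposition is canonical: $(R',1)\cong\Gamma^{-1}(A/\Rad A)$ is determined up to isomorphism by $A/\Rad A$ through Mundici's categorical equivalence $\Gamma$, while $G$ is the $\ell$-group whose positive cone is isomorphic, as a lattice-ordered monoid, to $\Rad A$ via $\eta_I$. Since both $A/\Rad A$ and $\Rad A$ are intrinsic to $A$, the pair $(R',1)$, $G$ is unique up to isomorphism. I expect the only genuine obstacle to be the clean verification that the distinguished lexicographic ideal coincides with $\Rad A$ (the two inclusions above), together with the correct invocation of H\"older's theorem to pass from ``Archimedean $ou$-group'' to ``$\ell u$-subgroup of $(\mathbb{R},1)$''; everything else is bookkeeping transported from Theorem~\ref{reprthlex}.
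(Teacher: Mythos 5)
Your proof is correct. The paper states this corollary without an explicit proof (it is imported from the cited work on lexicographic MV-algebras), and your argument is exactly the intended derivation: specialize Theorem~\ref{reprthlex} to $I=\Rad A$, use that $A/\Rad A$ is a semisimple chain (hence simple, hence $\Gamma$ of an $\ell u$-subgroup of $(\mathbb{R},1)$ by H\"older), and for the converse and the uniqueness clause show that the distinguished ideal $\{0\}\times G_+$ of any such representation is forced to coincide with $\Rad A$, so that $(R',1)$ and $G$ are recovered from the intrinsic data $A/\Rad A$ and $\Rad A$.
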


In \cite{dfl}, the authors also showed the following inclusions which give an interesting classification of some classes of MV-algebras:

\begin{center}
Perfect $\subset$ Local with retractive radical $\subset$ Lexicographic $\subset$ Local.
\end{center}

\section{MV-Topological Spaces}\label{mvtopsec}

Both crisp and fuzzy subsets of a given set will be identified with their membership functions and usually denoted by lower case latin or greek letters. In particular, for any set $X$, we shall use also $\1$ and $\0$ for denoting, respectively, $X$ and $\varnothing$. In some cases, we shall use capital letters in order to emphasize that the subset we are dealing with is crisp.

An MV-topological space is basically a special fuzzy topological space in the sense of C. L. Chang \cite{chal} and most of the definitions and results of the present subsection are simple adaptations of the corresponding ones of the aforementioned work to the present context or directly derivable from the same work or from the results presented in the papers \cite{hoh1,hoh2,hoh3,low,rod1,rod2,sto1,sto2}.

\begin{definition}\label{mvtop}
Let $X$ be a set, $A$ the MV-algebra $[0,1]^X$ and $\tau \subseteq A$. We say that $\left( X, \tau\right)$ is an \emph{MV-topological space} (or \emph{MV-space}) if $\tau$ is a subuniverse both of the quantale $\left( [0,1]^X, \bigvee, \oplus\right)$ and of the semiring $\left( [0,1]^X, \wedge, \odot, \1\right)$. More explicitly, $\left( X, \tau\right)$ is an MV-topological space if
\begin{enumerate}[(i)]
\item $\0, \1 \in \tau$,
\item for any family $\{o_i\}_{i \in I}$ of elements of $\tau$, $\bigvee_{i \in I} o_i \in \tau$,
\end{enumerate}
and, for all $o_1, o_2 \in \tau$,
\begin{enumerate}[(i)]
\setcounter{enumi}{2}
\item $o_1 \odot o_2 \in \tau$,
\item $o_1 \oplus o_2 \in \tau$,
\item $o_1 \wedge o_2 \in \tau$.
\end{enumerate}
$\tau$ is also called an \emph{MV-topology} on $X$ and the elements of $\tau$ are the \emph{open MV-subsets} of $X$. The set $\tau^* = \{o^* \mid o \in \tau\}$ is easily seen to be a subquantale of $\left( [0,1]^X, \bigwedge, \odot\right)$ (where $\bigwedge$ has to be considered as the join w.r.t. to the dual order $\geq$ on  $[0,1]^X$) and a subsemiring of $\left( [0,1]^X, \vee, \oplus, \0\right)$, i.e., it verifies the following properties:
\begin{itemize}
\item[$-$] $\0, \1 \in \tau^*$,
\item[$-$] for any family $\{c_i\}_{i \in I}$ of elements of $\tau^*$, $\bigwedge_{i \in I} c_i \in \tau^*$,
\item[$-$] for all $c_1, c_2 \in \tau^*$, $c_1 \odot c_2, c_1 \oplus c_2, c_1 \vee c_2 \in \tau^*$.
\end{itemize}
The elements of $\tau^*$ are called the \emph{closed MV-subsets} of $X$.
\end{definition}

Let $X$ and $Y$ be sets. Any function $f: X \lto Y$ naturally defines a map
\begin{equation}\label{muf}
\begin{array}{cccc}
f\fcou: & [0,1]^Y & \lto 		& [0,1]^X \\
		 & \alpha  & \lmapsto & \alpha \circ f.
\end{array}
\end{equation}
Obviously $f\fcou(\0) = \0$; moreover, if $\alpha, \beta \in [0,1]^Y$, for all $x \in X$ we have $f\fcou(\alpha \oplus \beta)(x) = (\alpha \oplus \beta)(f(x)) = \alpha(f(x)) \oplus \beta(f(x)) = f\fcou(\alpha)(x) \oplus f\fcou(\beta)(x)$ and, analogously, $f\fcou(\alpha^*) = f\fcou(\alpha)^*$. Then $f\fcou$ is an MV-algebra homomorphism and we shall call it the \emph{MV-preimage} of $f$. The reason of such a name is essentially the fact that $f\fcou$ can be seen as the preimage, via $f$, of the fuzzy subsets of $Y$. From a categorical viewpoint, once denoted by $\Set$, $\BB$ and $\MV$ the categories of sets, Boolean algebras, and MV-algebras respectively (with the obvious morphisms), there exist two contravariant functors $\wp: \Set \lto \BB^{\op}$ and $\fwp: \Set \lto \MV^{\op}$ sending each map $f: X \lto Y$, respectively, to the Boolean algebra homomorphism $f\cou: \wp(Y) \lto \wp(X)$ and to the MV-homomorphism $f\fcou: [0,1]^Y \lto [0,1]^X$.

Moreover, for any map $f: X \lto Y$ we define also a map $f^\to: [0,1]^X \lto [0,1]^Y$ by setting, for all $\alpha \in [0,1]^X$ and for all $y \in Y$,
\begin{equation}\label{ovf}
f^\to(\alpha)(y) = \bigvee_{f(x) = y} \alpha(x).
\end{equation}
Clearly, if $y \notin f[X]$, $f^\to(\alpha)(y) = \bigvee \varnothing = \0$ for any $\alpha \in [0,1]^X$.

\begin{definition}\cite{chal}\label{cont}
Let $\left( X, \tau_X \right)$ and $\left( Y, \tau_Y \right)$ be two MV-topological spaces. A map $f: X \lto Y$ is said to be
\begin{itemize}
\item \emph{continuous} if $f\fcou[\tau_Y] \subseteq \tau_X$,
\item \emph{open} if $f^\to(o) \in \tau_Y$ for all $o \in \tau_X$,
\item \emph{closed} if $f^\to(c) \in \tau^*_Y$ for all $c \in \tau^*_X$
\item an \emph{MV-homeomorphism} if it is bijective and both $f$ and $f^{-1}$ are continuous.
\end{itemize}
\end{definition}
We can use the same words of the classical case because, as it is trivial to verify, if a map between two classical topological spaces is continuous, open, or closed in the sense of the definition above, then it has the same property in the classical sense.

\begin{definition}\cite{war}\label{base}
As in classical topology, we say that, given an MV-topological space $\left( X, \tau\right)$, a subset $B$ of $[0,1]^X$ is called a \emph{base} for $\tau$ if $B\subseteq \tau$ and every open set of $(X,\tau)$ is a join of elements of $B$.
\end{definition}

\begin{lemma}\cite{rus2}\label{bascont}
Let $\left( X, \tau_X\right)$ and $\left( Y, \tau_Y\right)$ be two MV-topological spaces and let $B$ be a base for $\tau_Y$. A map  $f: X \lto Y$ is continuous if and only if $f\fcou[B] \subseteq \tau_X$.
\end{lemma}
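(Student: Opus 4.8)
The plan is to follow the classical "continuity via a base" argument, the one genuinely MV-theoretic ingredient being that the MV-preimage map $f\fcou$ commutes with arbitrary joins.

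The forward implication I expect to be immediate: if $f$ is continuous then $f\fcou[\tau_Y] \subseteq \tau_X$ by definition, and since $B \subseteq \tau_Y$ for any base, monotonicity of the image under $f\fcou$ gives $f\fcou[B] \subseteq f\fcou[\tau_Y] \subseteq \tau_X$.

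For the converse I would assume $f\fcou[B] \subseteq \tau_X$ and fix an arbitrary $o \in \tau_Y$. By the definition of a base, I can write $o = \bigvee_{i \in I} b_i$ with each $b_i \in B$. The key step is then to establish the identity
$$f\fcou\left(\bigvee_{i \in I} b_i\right) = \bigvee_{i \in I} f\fcou(b_i),$$
which I would verify pointwise: for every $x \in X$ both sides equal $\bigvee_{i \in I} b_i(f(x))$, because joins in $[0,1]^Y$ and $[0,1]^X$ are computed coordinatewise and $f\fcou(\alpha) = \alpha \circ f$. Granting this, each $f\fcou(b_i)$ lies in $\tau_X$ by hypothesis, and closure of $\tau_X$ under arbitrary joins (item (ii) of Definition \ref{mvtop}) yields $f\fcou(o) = \bigvee_{i \in I} f\fcou(b_i) \in \tau_X$. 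Since $o$ was arbitrary, this shows $f\fcou[\tau_Y] \subseteq \tau_X$, i.e.\ $f$ is continuous.

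The only point requiring care---and the step I would write out in full---is the join-preservation identity, since it does \emph{not} follow merely from $f\fcou$ being an MV-homomorphism (homomorphisms respect only finitary operations). Rather, it is a direct consequence of joins being pointwise suprema in the function MV-algebras $[0,1]^X$ and $[0,1]^Y$, together with the defining formula $f\fcou(\alpha) = \alpha \circ f$. Everything else is a routine transcription of the familiar topological argument.
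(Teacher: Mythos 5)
Your proof is correct and is the standard argument; the paper itself gives no proof of this lemma (it is imported from \cite{rus2}), and your argument---forward direction by $B \subseteq \tau_Y$, converse by writing $o = \bigvee_i b_i$ and checking pointwise that $f\fcou$ commutes with arbitrary joins---is exactly the expected one. Your remark that join-preservation must be verified pointwise rather than deduced from $f\fcou$ being an MV-homomorphism is a worthwhile precision, and the empty join handles $o = \0$ without incident.
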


A \emph{covering} of $X$ is any subset $\Gamma$ of $[0,1]^X$ such that $\bigvee \Gamma = \1$ \cite{chal}, while an \emph{additive covering} ($\oplus$-covering, for short) is a finite family $\{\alpha_i\}_{i=1}^n$ of elements of $[0,1]^X$, $n < \omega$, such that $\alpha_1 \oplus \cdots \oplus \alpha_n = \1$. It is worthwhile remarking that we used the expression ``finite family'' in order to include the possibility for such a family to have repetitions. In other words, an additive covering is a finite subset $\{\alpha_1, \ldots, \alpha_k\}$ of $[0,1]^X$, along with natural numbers $n_1, \ldots, n_k$, such that $n_1\alpha_1 \oplus \cdots \oplus n_k \alpha_k = \1$.

\begin{definition}\label{compact}
An MV-topological space $\left( X, \tau \right)$ is said to be \emph{compact} if any open covering of $X$ contains an additive covering; it is called \emph{strongly compact} if any open covering contains a finite covering.\footnote{What we call strong compactness here is called simply compactness in the theory of lattice-valued fuzzy topologies \cite{chal}.}
\end{definition}

\begin{definition}\label{t2ax}
Let $\left( X, \tau\right)$ be an MV-topological space. $X$ is called a \emph{Hausdorff} (or \emph{separated}) \emph{space} if, for all $x \neq y \in X$, there exist $o_x, o_y \in \tau$ such that
\begin{enumerate}[(i)]
\item $o_x(x) = o_y(y) = 1$,
\item $o_x \wedge o_y = \0$.
\end{enumerate}
\end{definition}

\section{The Maximal Spectrum and MV-Spectrum of an MV-algebra}\label{specs}

In the present section, we shall recall the Zariski topology on the set $\Max A$ of maximal ideals of an MV-algebra $A$; then we shall see the MV-topology defined in \cite{rus2} on the same set and how the two spaces are related to each other. In order to do that, let us first consider the set $\Spec A$ of all prime ideals of $A$.

For any ideal $I$ of $A$, let
\begin{equation}\label{r(I)}
  r(I) = \{P\in \Spec A: I\nsubseteq P\}
\end{equation}
Then the set $\tau = \{r(I): I\in \Id(A)\}$ is the family of open set of a topology on $\Spec A$. Indeed,
\begin{enumerate}[(i)]
  \item $r(\{\0\})=\emptyset$,
  \item $r(A)=\Spec A$,
  \item $r(I\wedge J)=r(I)\cap r(J)$ for all $I,J\in \Id(A)$,
  \item $r(\bigvee\{I_\lambda:\lambda\in \Lambda\})=\bigcup\{r(I_\lambda):\lambda\in\Lambda\}$ for any $\{I_\lambda:\lambda\in \Lambda\}\subseteq\Id(A).$
\end{enumerate}
In the sequel, $\tau$ will be referred to as the \emph{spectral topology} or the \emph{Zariski topology}.

Now, for any $a\in A$, let
\begin{equation}
r(a) = \{P\in \Spec A:a\notin P\}.
\end{equation}
We have the following properties.
\begin{lemma}\label{rprop}\emph{\cite{dinleusbook}}
\begin{enumerate}[(i)]
\item $r(a)=r((a])$ for any $a\in A$,
\item $r(\0)=\emptyset$,
\item $r(\1)=\Spec A$,
\item $r(a\vee b)=r(a\oplus b)=r(a)\cup r(b)$, for all $a, b \in A$,
\item $r(a\wedge b)=r(a)\cap r(b)$, for all $a, b \in A$,
\item $r(I)=\bigcup\{r(a):a\in I\}$, for any $I\in \Id(A)$.
\end{enumerate}
\end{lemma}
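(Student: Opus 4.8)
The plan is to verify each identity directly from the definitions $r(a) = \{P \in \Spec A : a \notin P\}$ and $r(I) = \{P \in \Spec A : I \nsubseteq P\}$, translating every set-theoretic assertion into a membership condition on a prime ideal $P$ and then exploiting the three structural facts available from Section~\ref{mvalg}: that each ideal is a downward-closed submonoid of $\la A, \oplus, \0\ra$, that a prime ideal $P$ is proper and satisfies $a \wedge b \in P \To (a \in P \text{ or } b \in P)$, and that $(a]$ is the least ideal containing $a$.

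I would dispatch the easy items first. For (i), note that $(a] \subseteq P \Longleftrightarrow a \in P$, since $(a]$ is the smallest ideal containing $a$ while $P$ is an ideal; negating gives $r(a) = r((a])$. Items (ii) and (iii) are immediate: $\0$ lies in every ideal, so $r(\0) = \emptyset$, whereas $\1$ lies in no proper ideal and every prime ideal is proper, so $\1 \notin P$ for all $P \in \Spec A$ and hence $r(\1) = \Spec A$. For (vi), $P \in r(I)$ means $I \nsubseteq P$, i.e. some $a \in I$ satisfies $a \notin P$, which is exactly $P \in \bigcup_{a \in I} r(a)$.

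The substance lies in (iv) and (v), both of which I would obtain by first establishing the matching membership equivalence and then complementing in $\Spec A$. For the $\oplus$-part of (iv) the key equivalence is $a \oplus b \in P \Longleftrightarrow (a \in P \text{ and } b \in P)$: the forward direction uses $a, b \leq a \oplus b$ together with downward closure, the backward direction uses closure of $P$ under $\oplus$. For the $\vee$-part I would prove the same equivalence for $\vee$; the only point needing an MV-identity is that $a, b \in P$ forces $a \vee b \in P$, which follows by writing $a \vee b = (a \odot b^*) \oplus b$ and observing $a \odot b^* \leq a \in P$, so $a \vee b$ is a sum of two members of $P$. Combining the two equivalences yields $r(a \vee b) = r(a) \cup r(b) = r(a \oplus b)$. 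For (v), the equivalence $a \wedge b \in P \Longleftrightarrow (a \in P \text{ or } b \in P)$ is precisely where primality enters: one direction is the defining property of a prime ideal, the other is downward closure applied to $a \wedge b \leq a$ and $a \wedge b \leq b$; complementing gives $r(a \wedge b) = r(a) \cap r(b)$.

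No step presents a genuine obstacle, as the lemma is essentially a dictionary converting ideal-membership into the basic Zariski opens; the only places requiring more than set-theoretic manipulation are the identity $a \vee b = (a \odot b^*) \oplus b$ invoked in (iv) and the appeal to the prime condition in (v). I would therefore dispose of (ii), (iii) and (vi) in a line each and concentrate the argument on the two membership equivalences underlying (iv) and (v).
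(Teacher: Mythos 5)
Your proof is correct; the paper itself gives no argument for this lemma, citing it directly from the Di Nola--Leu\c{s}tean handbook, so there is nothing internal to compare against. Your direct verification --- reducing each identity to a membership equivalence in a prime ideal $P$, using downward closure and closure under $\oplus$ for (iv), the identity $a \vee b = (a \odot b^*) \oplus b$ for the join case, and primality for (v) --- is exactly the standard proof of this result.
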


By Lemma \ref{rprop}(i,vi), $\{r(a):a\in A\}$ is a basis for the topology $\tau$. It is well-known also that the compact open subsets of $\Spec A$ are exactly the sets of the form $r(a)$ for some $a\in A$. In particular, $\Spec A$ is compact because $r(\1)=\Spec A$ (see \cite{dinleusbook}).

For each $a\in A$, the set $$H(a):=\{P\in\Spec A: a\in O_P\}$$ is an open set of $\Spec A$ \cite[Lemma 3.6]{ferraleti}.

Since $\Max A \subseteq\Spec A$ we can endow $\Max A$ with the topology induced by the spectral topology $\tau$ on $\Spec A$. This means that the open sets of $\Max A$ are
$$R(I)=r(I)\cap\Max A=\{M\in \Max A:I\nsubseteq M\}$$
So, for any $a\in A$ and $I\in\Id A$
$$R(a)=r(a)\cap\Max A=\{M\in \Max A:a\notin M\} \text{ and } R(I)=\bigcup\{R(a):a\in I\}$$
Hence the family $\{R(a):a\in A\}$ is a basis for the induced topology on $\Max A$. The set of opens in $\Max A$ will be denoted by $\mathcal{O}(\Max A)$.

By \cite[Theorem 3.6.10]{dinleusbook}, we have that for any MV-algebra $A$ the maximal ideal space, $\Max A$, is a compact Hausdorff topological space with respect to the topology induced by the spectral topology on $\Spec A$.

%Analogously, we have that the open sets of $\Min A$ are $$d(I)=r(I)\cap\Min A=\{m\in \Min A:I\nsubseteq m\}.$$

It is very well-known \cite{bel,cha1} that, for any MV-algebra $A$, there exists a canonical homomorphism $\iota: A \to [0,1]^{\Max A}$, where $\Max A$ is the set of maximal ideals of $A$. Such a homomorphism is defined as follows:
\begin{itemize}
\item for each $M \in \Max A$, there is the natural projection $\pi_M: A \lto A/M$;
\item for any $M \in \Max A$, $A/M$ is a simple MV-algebra and, therefore, is isomorphic to a subalgebra of $[0,1]$, i.e., there exists a (unique) embedding $\iota_M: A/M \lto [0,1]$;
\item the morphism $\iota: A \lto [0,1]^{\Max A}$ associates, to each $a \in A$, the fuzzy subset $\widehat a$ of $\Max A$ defined by $\widehat a(M) = \iota_M(\pi_M(a)) = \iota_M(a/M)$ for all $M \in \Max A$.
\end{itemize}
The kernel of $\iota$ is exactly $\Rad A$, and the homomorphism $\iota$ is an embedding if and only if $A$ is a semisimple algebra. So, for any MV-algebra $A$, $A/Rad A$ is isomorphic to a subalgebra $A'$ of $[0,1]^{\Max A}$. Therefore, $A'$ is a covering of $\Max A$ and, since it is a subalgebra of $[0,1]^{\Max A}$, it is closed under $\oplus$, $\odot$ and $\wedge$. Then it is a base for an MV-topology on $\Max A$. In the following results we shall often identify any semisimple MV-algebra $A$ with its isomorphic image included in $[0,1]^{\Max A}$; so any element $a$ of a semisimple MV-algebra will be identified with the fuzzy set $\widehat a$. The reader may refer to \cite{bel,cha1,cha2,mvbook} for further details.

\begin{definition}\cite{rus2}\label{mvspec}
The \emph{maximal MV-spectrum} of $A$ is the MV-topology $\tau_A$ on $\Max A$ whose base is the image $A'$ of the morphism $\iota$.
\end{definition}

%Let $(\Max A,\tau_A)$ be the Maximal MV-Spectrum of $A$. Let us see some of its properties and its relation with the topological space $\Max A$ with the Zariski topology. The basic opens of $\Max A$ denoted by $R(a)$ with $a\in A$, were defined in Subsection \ref{spectop}.

\begin{proposition}\label{r(a)=supp(widehat(a)}
Let $A$ be an MV-algebra and $(\Max A,\tau_A)$ be the associated MV-topological space. For each basic fuzzy open $\widehat{b}\in \tau_A$, $R(b)=\supp(\widehat{b})$.

Consequently, the Zariski topology on $\Max A$, as an MV-topology, is coarser than $\tau_A$.
\end{proposition}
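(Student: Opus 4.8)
The plan is to establish the pointwise set equality first and then read off the comparison of topologies from it. For $R(b)=\supp(\widehat b)$, I would fix $M\in\Max A$ and unwind both sides: membership $M\in R(b)$ means $b\notin M$, whereas $M\in\supp(\widehat b)$ means $\widehat b(M)\neq 0$, i.e. $\iota_M(b/M)\neq 0$. Since $A/M$ is simple, the canonical map $\iota_M\colon A/M\lto[0,1]$ is an embedding, hence injective, so $\iota_M(b/M)=0$ holds exactly when $b/M=0$ in $A/M$. By the standard bijection between ideals and congruences of an MV-algebra, $M$ is precisely the $0$-class of its associated congruence, so $b/M=0$ is equivalent to $b\in M$. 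Chaining these equivalences gives $b\notin M \liff \widehat b(M)\neq 0$, which is the asserted equality. This part is essentially a direct unwinding of definitions; the only ingredient beyond bookkeeping is the injectivity of $\iota_M$.

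For the \emph{consequently} clause I would view the Zariski topology as an MV-topology whose opens are the (crisp) characteristic functions of its classical opens, and show that each such function already lies in $\tau_A$. The key step is the saturation identity $\chi_{R(b)}=\bigvee_{n<\omega} n\widehat b$, where $n\widehat b=\widehat b\oplus\cdots\oplus\widehat b$ with $n$ summands. I would verify it pointwise: at $M$ with $\widehat b(M)=0$ every summand vanishes and the join is $0$, whereas at $M$ with $\widehat b(M)=c>0$ the truncated \L ukasiewicz sum gives $n\widehat b(M)=\min(1,nc)$, which reaches $1$ for $n$ large, so the join is $1$. Thus $\bigvee_{n<\omega} n\widehat b$ is exactly the characteristic function of $\supp(\widehat b)$, and by the first part this is $\chi_{R(b)}$.

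It then remains to invoke the closure properties of an MV-topology. Each $n\widehat b$ belongs to $\tau_A$ because $\tau_A$ is closed under $\oplus$, and the join $\bigvee_{n<\omega} n\widehat b$ belongs to $\tau_A$ because $\tau_A$ is closed under arbitrary joins; hence $\chi_{R(b)}\in\tau_A$ for every $b\in A$. Since $\{R(b):b\in A\}$ is a basis for the Zariski topology on $\Max A$, any Zariski open $U$ is a union $U=\bigcup_i R(b_i)$, and its characteristic function $\chi_U=\bigvee_i\chi_{R(b_i)}$ again lies in $\tau_A$. Therefore the MV-topology generated by the crisp Zariski opens is contained in $\tau_A$, i.e. the Zariski topology is coarser than $\tau_A$. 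I expect the main obstacle to be precisely the saturation identity of the second paragraph: one must notice that scaling a fuzzy open by large integers in the truncated \L ukasiewicz sense collapses it to the characteristic function of its support, which is what converts the fuzzy generators $\widehat b$ into the crisp generators $R(b)$ of the coarser topology.
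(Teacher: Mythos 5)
Your argument is correct and the first half coincides with the paper's own proof, which likewise just unwinds $\widehat b(M)=b/M=0\iff b\in M$ using that $A/M$ embeds in $[0,1]$. For the ``consequently'' clause the paper simply cites an external result (\cite[Proposition 3.5]{dlpr}), whereas you prove the needed fact directly via the saturation identity $\chi_{R(b)}=\bigvee_{n<\omega} n\widehat b$ --- which is exactly the device the paper itself uses later (Claim 1 in the proof of Proposition \ref{MV-space is weak induced}), so your route is the same in substance, just self-contained.
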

\begin{proof}
  In fact, for each $M\in\Max A$, $\widehat{b}(M)=\frac{b}{M}=0$ if and only if $b\in M$. That is, $M\in\supp \widehat{b}$ iff $\widehat{b}(M)=\frac{b}{M}>0$ iff $b\notin M$ iff $M\in R(b)$. The second statement follows from the first one and \cite[Proposition 3.5]{dlpr}
\end{proof}

\begin{proposition}\label{H(a) is open}
  For each $a\in A$, the set $H(a)=\{M\in\Max A: a\in O_M\}$ is an element of $\tau_A$.
\end{proposition}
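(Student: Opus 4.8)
The plan is to show that the characteristic function $\chi_{H(a)}$ of the crisp set $H(a)$ belongs to $\tau_A$, using the paper's convention that identifies a crisp subset with its membership function. Since the base of $\tau_A$ is $A' = \{\widehat b : b \in A\}$ and the opens of $\tau_A$ are exactly the joins of base elements, it suffices to exhibit $\chi_{H(a)}$ as a join of functions of the form $\widehat b$. First I would unfold the definition of $O_M$. Applying Proposition \ref{O_P ch} to the (maximal, hence prime) ideal $M$ gives $O_M = \bigcup\{c^\perp : c \notin M\}$, so $a \in O_M$ if and only if there is some $c \notin M$ with $a \wedge c = 0$; by symmetry of orthogonality this says $c \in a^\perp$ and $M \in R(c)$. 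Hence $H(a) = \bigcup\{R(c) : c \in a^\perp\}$, a union of basic opens of the Zariski topology on $\Max A$.

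Next I would upgrade each crisp basic open $R(c)$ to a genuine MV-open. The key observation is the identity $\chi_{R(c)} = \bigvee_{n \geq 1}\widehat{nc}$, where $nc = c \oplus \cdots \oplus c$ ($n$ times): at a maximal ideal $M$ with $c \in M$ every summand $\widehat{nc}(M)$ vanishes, whereas if $c \notin M$ then $\widehat c(M) > 0$ by Proposition \ref{r(a)=supp(widehat(a)}, and the truncated sums $\widehat{nc}(M) = n\,\widehat c(M)$ increase to $1$. Since each $\widehat{nc} \in A'$, this displays $\chi_{R(c)}$ as a join of base elements, so $\chi_{R(c)} \in \tau_A$. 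Combining the two steps, $\chi_{H(a)} = \bigvee_{c \in a^\perp}\bigvee_{n \geq 1}\widehat{nc}$ is again a join of base elements, and therefore $H(a) \in \tau_A$.

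A shorter route packages the same content at a higher level: $H(a)$ is already known to be open in the Zariski topology (cf. the discussion and Proposition \ref{O_P ch} preceding this statement), hence its trace on $\Max A$ is Zariski-open there, and by Proposition \ref{r(a)=supp(widehat(a)} the Zariski MV-topology is coarser than $\tau_A$, whence $H(a) \in \tau_A$. I expect the only genuine obstacle to be the passage from a crisp open to an MV-open, concentrated entirely in the identity $\bigvee_{n}\widehat{nc} = \chi_{R(c)}$; this is precisely where the MV-structure (the Archimedean truncation of $\oplus$ in $[0,1]$) does the work, and it is also the mechanism underlying the ``coarser'' comparison of Proposition \ref{r(a)=supp(widehat(a)}. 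Everything else reduces to rewriting $O_M$ via orthogonality and the fact that arbitrary joins of opens are open.
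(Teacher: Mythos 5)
Your proof is correct and follows essentially the same route as the paper: both use Proposition \ref{O_P ch} to write $H(a)$ as the union of the supports $R(c)=\supp\widehat{c}$ over witnesses $c$ with $a\wedge c=\0$, and then invoke the fact that such supports are MV-open. The only difference is one of completeness: the paper stops at exhibiting $H(a)$ as the support of the open $\bigvee_{M\in H(a)}\widehat{b_M}$ (leaving implicit why that makes $\chi_{H(a)}$ itself an element of $\tau_A$), whereas you spell out the missing mechanism via the identity $\chi_{R(c)}=\bigvee_{n\geq 1}\widehat{nc}$, which is a welcome clarification rather than a new approach.
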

\begin{proof}
We will prove that $H(a)$ is the support of a fuzzy open of $\tau_A$. If $M\in H(a)$ then $a\in O_M$, so by Proposition \ref{O_P ch} there exists $b_M\notin M$ such that $a\wedge b_M=\0$. That is, $M\in R(b_M)=\supp\left(\widehat{b_M}\right)$. Let us see that
	$$H(a)=\supp\left(\bigvee_{M\in H(a)}\widehat{b_M}\right).$$
  In fact, if $N\in H(a)$ then there exists $b_N$ such that $b_N\notin N$ and $a\wedge b_N=\0$, then $\widehat{b_N}(N)>0$, and therefore $\left(\bigvee_{M\in H(a)}\widehat{b_M}\right)(N)= \bigvee_{M\in H(a)}\widehat{b_M}(N) >0$, i.e, $N\in \supp\left(\bigvee_{M\in H(a)}\widehat{b_M}\right)$. For the other inclusion, if $\left(\bigvee_{M\in H(a)}\widehat{b_M}\right)(N)>0$ then there exists $\widehat{b_M}$ with $M\in H(a)$ such that $\widehat{b_M}(N)>0$, i.e., $b_M\notin N$ and $a\wedge b_M=\0$, then $a\in O_N$ and therefore $N\in H(a)$.
\end{proof}

\section{MV-sheaves}\label{sheafsec}

Let $\left(X,\tau \right)$ be an MV-topological space. The poset of open fuzzy subsets $\tau\subseteq [0,1]^X$, with the fuzzy inclusion $\leq$, can be viewed as a category in the usual manner, namely, $\tau$ is the object class and, for all $\alpha,\beta\in \tau$, there is exactly one morphism $\alpha\lto \beta$ if $\alpha\leq \beta$, there are none otherwise.

%ver Tennison

\begin{definition}
  Let $(X,\tau)$ be an MV-topological space and let $\C$ be a category. % (of algebras).
	An \emph{MV-presheaf} of $\obj(\C)$ on $X$ is a contravariant functor $F:\tau\lto \C$, that is:

  \begin{enumerate}[(i)]
    \item for each fuzzy open set $\alpha$ in $\tau$, $F(\alpha)$ is an object of $\C$, called the set of sections of $F$ over $\alpha$;
    \item for each pair of fuzzy open sets $\beta\leq\alpha$ in $\tau$, the image of the morphism $\beta \lto \alpha$ is the so-called \emph{restriction map} $\rho_\beta^\alpha:F(\alpha)\lto F(\beta)$ with the following properties:
    \begin{enumerate} [(a)]
      \item $\rho_\alpha^\alpha=\id_{F(\alpha)}$, for all $\alpha$;
      \item $\rho_\gamma^\alpha=\rho_\gamma^\beta\circ\rho_\beta^\alpha$, whenever $\gamma\leq\beta\leq\alpha$ in $\tau$.
    \end{enumerate}
  \end{enumerate}
\end{definition}

\begin{definition}
    Let $F$ and $G$ be MV-presheaves of $\ob(\C)$ over $(X,\tau)$. A \emph{morphism of MV-presheaves} from $F$ to $G$ is a natural transformation $f:F\To G$, that is, a family $\{f(\alpha):F(\alpha)\lto G(\alpha)\}_{\alpha\in \tau}$ such that, whenever $\beta\leq\alpha$ are open fuzzy sets in $\tau$, the diagram
  $$\xymatrix{
F(\alpha) \ar[r]^{f(\alpha)} \ar @{->}[d]_{\rho_\beta^\alpha} & G(\alpha)  \ar @{->}[d]^{\rho\prime_\beta^\alpha}\\
F(\beta) \ar[r]^{f(\beta)}               		& G(\beta)	
}$$
commutes.
\end{definition}

\begin{exm}\label{const}
Let $A$ be a fixed object in the category $\C$ and $(X,\tau_X)$ be an MV-space. We define the constant MV-presheaf $A_X:\tau_X\lto \C$ on $(X,\tau_X)$, by setting:
\begin{itemize}
\item $A_X(\alpha)=A$ for all $\alpha$ in $\tau_X$, and
\item $\rho_\beta^\alpha=\id_A: A_X(\alpha)\lto A_X(\beta)$ for $\beta\leq\alpha$ in $\tau_X$.
\end{itemize}
\end{exm}
\begin{exm}\label{CY}
Let $(X,\tau_X)$ and $(Y, \tau_Y)$ be MV-topological spaces. Let us consider $C^Y:\tau_X\to\Set$ defined by $$C^Y(\alpha)=\{f:\supp(\alpha)\lto Y\mid f \text{ is continuous}\},$$ with $\rho_\beta^\alpha: C^Y(\alpha)\lto C^Y(\beta)$ such that $\rho_\beta^\alpha(f)=f_{\restr\supp(\beta)}$ for $\beta\leq\alpha$ in $\tau_X$. $C^Y$ is an MV-presheaf of sets over $X$. Note that $\supp(\beta)\subseteq \supp(\alpha)$ if $\beta\leq\alpha$.
\end{exm}

\begin{definition}\label{sheaf}
  An MV-presheaf of sets over the MV-topological space $(X,\tau_X)$ satisfying the following two conditions is called an \emph{MV-sheaf} of $\ob(\C)$.
  \begin{enumerate}[(i)]
    \item If $\alpha$ is a fuzzy open set of $X$ and the family $\{\alpha_i\}_{i\in I}\subseteq [0,1]^X$ is an open covering of $\alpha$, i.e., $\alpha = \bigvee_{i \in I} \alpha_i$, and $s, s' \in F(\alpha)$ are two sections of $F$ such that for all $i \in I$
        $$\rho_{\alpha_i}^\alpha(s)=\rho_{\alpha_i}^\alpha(s')$$
        then $s=s'$.
    \item If $\alpha$ is a fuzzy open set of $X$ and the family $\{\alpha_i\}_{i\in I}\subseteq [0,1]^X$ is an open covering of $\alpha$; and if there is a family $\{s_i\}_{i\in I}$ of sections of $F$ with $s_i\in F(\alpha_i)$ for all $i\in I$, such that for all $i,j\in I$
$$\rho_{\alpha_i\wedge\alpha_j}^{\alpha_i}(s_i)=\rho_{\alpha_i\wedge\alpha_j}^{\alpha_j}(s_j)$$
then there is $s\in F(\alpha)$ such that for all $i\in I$
$$\rho_{\alpha_i}^\alpha(s)=s_i.$$
  \end{enumerate}
  In other words, if the system $(s_i)_{i\in I}$ is given on a covering and is consistent on all of the overlaps, then it comes from a section over all of the $\alpha$'s.
  \end{definition}

\begin{definition}
  If $F, G$ are MV-sheaves of $\ob(\C)$ and $f : F\To G$ is an MV-presheaf morphism, we also call $f$ a \emph{morphism of MV-sheaves}.
\end{definition}

\begin{exm}
  The MV-presheaf $C^Y$, described in the Example \ref{CY}, is an MV-sheaf. Let us see that $C^Y$ satisfies the two conditions Definition \ref{sheaf}.
	
  Let $\alpha$ be a fuzzy open set of $X$ and let $\{\alpha_i\}_{i\in I}\subseteq [0,1]^X$ be an open covering of $\alpha$, i.e., $\alpha = \bigvee_{i \in I} \alpha_i$,
  \begin{enumerate}[(i)]
    \item let $f, f' \in C^Y(\alpha)$ be two sections of $C^Y$ such that for all $i \in I$, $$\rho_{\alpha_i}^\alpha(f)=\rho_{\alpha_i}^\alpha(f'),$$
        that is, $$f_{\restr\supp(\alpha_i)}=f'_{\restr\supp(\alpha_i)}$$ where $f,f': \supp(\alpha)\to Y$.
				
        Note that $\bigcup_{i\in I}\supp(\alpha_i)=\supp(\alpha)$ because $\alpha = \bigvee_{i \in I} \alpha_i$. Let us see that $f=f'$.
				
        If $x\in \supp(\alpha)$, then there exists $i\in I$ such that $x\in \supp(\alpha_i)$, so $$f(x)=f_{\restr\supp(\alpha_i)}(x)=f'|\supp(\alpha_i)(x)=f'(x)$$ then $f=f'.$

    \item For the second condition, suppose that there is a family $\{f_i\}_{i\in I}$ of sections of $C^Y$ with $f_i\in C^Y(\alpha_i)$ for all $i\in I$, such that for all $i,j\in I$
$$\rho_{\alpha_i\wedge\alpha_j}^{\alpha_i}(f_i)=\rho_{\alpha_i\wedge\alpha_j}^{\alpha_j}(f_j)$$
We define $f:=\bigcup_{i\in I}f_i: \supp(\alpha)\lto Y$ by $f(x)=f_i(x)$ if $x\in \supp(\alpha_i)=\dom(f_i)$. We know that $\supp(\alpha)=\bigcup_{i\in I}\supp(\alpha_i)$, then $f$ is well defined because for all $i,j\in I$, $x\in \supp(\alpha_i)\cap \supp(\alpha_j)$ iff $x\in\supp(\alpha_i\wedge\alpha_j)$, and by hypothesis $${f_i}_{\restr\supp(\alpha_i\wedge\alpha_j)}(x)={f_j}_{\restr\supp(\alpha_i\wedge\alpha_j)}(x)$$ where $f_i:\supp(\alpha_i)\lto Y$ and $f_j: \supp(\alpha_j)\lto Y$. It is clear that $f_{\restr\supp(\alpha_i)}=f_i$, for each $i\in I$.

Now, let us prove that $f$ is continuous.

Let $\gamma\in \tau_Y$, and let us prove that $\gamma\circ f\in \tau_{\supp(\alpha)}$. For each $i\in I$, $\gamma\circ f_i\in \tau_{\supp(\alpha_i)}$, i.e., $\gamma\circ f_i=\beta\wedge\supp(\alpha_i)$ with $\beta\in\tau_X$. As $\supp(\alpha_i)=\supp(\alpha_i)\wedge\supp(\alpha)$, then $\gamma\circ f_i=\beta\wedge\supp(\alpha_i)\wedge\supp(\alpha)$. Thus, for each $i\in I$, $\gamma\circ f_i\in\tau_{\supp(\alpha)}$ because $\beta\wedge\supp(\alpha_i)\in\tau_X$. Therefore, $\gamma\circ f=\bigvee_{i\in I}(\gamma\circ f_i)\in \tau_{\supp(\alpha)}$.
\end{enumerate}
\end{exm}

%\textbf{Aquí}: por que un abierto restringido a un conjunto menor, visto como abierto en un subespacio, resulta ser abierto tambien en el espacio grande? poniendo cero en el complemento? que me asegura que sea abierto en el  maas grande?

\begin{definition}
 A \emph{directed set} $I$ is a set with a pre-order $\leq$ which satisfies the following:
 \begin{enumerate}
   \item[(a)] for all $i,j\in I$, there exists $k\in I$ such that $i\leq k$ and $j\leq k$.
  \end{enumerate}
  A \emph{direct system} of sets indexed by a directed set $I$ is a family $\{\alpha_i\}_{i\in I}$ of sets together with maps $\rho_{ij}:\alpha_i\lto\alpha_j$, for each $i\leq j \in I$, satisfying
  \begin{enumerate}
    \item[(b)] For all $i\in I$, $\rho_{ii}=\id_{\alpha_i}$;
    \item[(c)] For all $i,j,k\in I$, $i\leq j\leq k$ implies $\rho_{i k}=\rho_{j k}\circ\rho_{i j}$.
  \end{enumerate}
\end{definition}

Let $F$ be an MV-presheaf of $\ob(\C)$ over an MV-topological space $(X,\tau)$ and fix $x\in X$. Then $\{F(\alpha): x\in \supp(\alpha)\}$,  forms a direct system with maps $\rho_\beta^\alpha:F(\alpha)\lto F(\beta)$, whenever $\beta\leq\alpha$, and $x\in \supp(\beta)\subseteq \supp(\alpha)$. We have the following definition:

\begin{definition}
The \emph{MV-stalk} $F_x$ of $F$ at $x$ is $$\lim_{x\in \supp(\alpha)} F(\alpha),$$
which comes equipped with maps $F(\alpha)\lto F_x$ such that $s\lmapsto s_x$ whenever $x\in \supp(\alpha)$ for $\alpha \in \tau$. The members of $F_x$ are also called \emph{germs} (of sections of $F$).
\end{definition}
%\begin{proposition}(tennison pag 9)
 % \begin{enumerate}[(a)]
  %  \item Each germ $t\in F_x$ arises as $t=s_x$ for some $s\in F(\alpha)$ for open neighbourhood $\alpha$ of $x$.
%    \item Two germs $s_x,t_x\in F_x$, with $s\in F(\alpha), t\in F(\beta)$, are equal, i. e., $s_x=t_x$ iff there exist an open fuzzy set $\gamma\leq\alpha\wedge\beta$ such that $\rho_\gamma^\alpha(s)=\rho_\gamma^\beta(t)$.
  %\end{enumerate}
%\end{proposition}

\begin{definition}
  Let $(X,\tau_X)$ be an MV-topological space. An \emph{MV-sheaf space} over X is a triple $(E,p,X)$ where $(E,\tau_E)$ is an MV-topological space and $p:E\lto X$ is a \emph{local MV-homeomorphism}, that is, $p$ is continuous and, for all $x\in E$, there exists an open fuzzy set $\alpha\in\tau_E$ such that $\alpha(x)>0$ and an open fuzzy set $\beta\in \tau_X$ such that $p_{\restr\supp(\alpha)}: \supp(\alpha)\lto \supp(\beta)$ is an MV-homeomorphism.
  \end{definition}
A morphism of MV-sheaf spaces over $X$, $f: (E, p, X)\lto (E',p',X)$, is a continuous map $f:E\lto E'$ such that $p=p'\circ f$.

We can construct an MV-sheaf of sets from an MV-sheaf space and reciprocally, we can construct an MV-sheaf space from an MV-sheaf. These constructions follow the canonical rules of sheaf theory on topological spaces (see \cite{davey, tenn}).

\section{Locally retractive MV-algebras}
\label{mvrsec}

In this section we will define the class of locally retractive MV-algebras, which are the algebras that will be isomorphically represented by MV-sheaves of $\ell$-groups, and we shall discuss some properties of such algebras, along with their relationship with lexicographic MV-algebras. We recall that an MV-algebra $A$ is said to have \emph{retractive radical} if the natural projection $p: A \to A/\Rad A$ is a retraction, i.e., if there exists an embedding $j: A/\Rad A \to A$ such that $p \circ j = \id_{A/\Rad A}$. We shall prove that algebras with retractive radical are locally retractive (Theorem \ref{mvrthm}) while the converse is not true (Example \ref{mvlexm}). However, we will also give a necessary and sufficient condition under which a locally retractive MV-algebra has retractive radical (Theorem \ref{mvlthm}).

\begin{definition}\label{mvl}
An MV-algebra $A$ is called \emph{locally retractive} if, for all $M \in \Max A$, $A/O_M$ has retractive radical. We shall denote by $\MVl$ the full subcategory of $\MV$ whose objects are the locally retractive algebras.
\end{definition}

Before proving the next results, we recall that, for any MV-algebra $A$ and for each $M \in \Max A$, $\Rad(A/O_M) = M/O_M$ and, therefore, $\frac{A/O_M}{\Rad(A/O_M)} \cong A/M$.

\begin{lemma}\label{prodmvl}
If $A \cong \prod_{M \in \Max A} A/O_M$, then $A \in \MVl$ if and only if it has retractive radical.
\end{lemma}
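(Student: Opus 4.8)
The plan is to exploit the hypothesis $A\cong\prod_{M\in\Max A}A/O_M$ head-on, writing $A_M\bydef A/O_M$, and to reduce the whole statement to two elementary facts about products of MV-algebras. First I would record that, since $\Rad C=\{a\mid na\le a^*\ \text{for all }n\}$ and all operations and the order in a product are computed coordinatewise, one has $\Rad\left(\prod_M A_M\right)=\prod_M\Rad A_M$; combined with the standard isomorphism $\left(\prod_M A_M\right)/\left(\prod_M\Rad A_M\right)\cong\prod_M(A_M/\Rad A_M)$, this identifies the radical projection $p\colon A\lto A/\Rad A$ with the coordinatewise map $\prod_M p_M$, where $p_M\colon A_M\lto A_M/\Rad A_M$ and $\Rad A_M=M/O_M$. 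Second I would isolate the key lemma that the only Boolean element lying in a radical is $0$: if $b$ is Boolean (i.e.\ $b\oplus b=b$, equivalently $b\wedge b^*=0$) and $b\in\Rad C$, then $b=b\oplus b\le b^*$, so $b=b\wedge b^*=0$.

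For the implication \emph{locally retractive $\To$ retractive radical}, I would assume each $A_M$ carries a retraction $j_M\colon A_M/\Rad A_M\lto A_M$ of $p_M$ and form the coordinatewise product $J\bydef\prod_M j_M$, a homomorphism $\prod_M(A_M/\Rad A_M)\lto\prod_M A_M$ with $(\prod_M p_M)\circ J=\id$. Transporting along the isomorphisms above and the hypothesis $A\cong\prod_M A_M$ turns $J$ into a retraction of $p$, so $A$ has retractive radical.

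For the converse I would suppose $A\cong\prod_M A_M$ has retractive radical, with retraction $J\colon A/\Rad A\lto A$ of $p$, and fix $N\in\Max A$. Writing $\mathrm{pr}_N\colon\prod_M A_M\lto A_N$ for the $N$-th projection and $\sigma_N\bydef\mathrm{pr}_N\circ J$, and letting $f_N\in A/\Rad A$ be the Boolean element equal to $1$ in coordinate $N$ and $0$ elsewhere, the crucial point is that $\sigma_N(f_N)=1_{A_N}$: indeed $\sigma_N(f_N)$ is Boolean and $p_N\bigl(\sigma_N(f_N)\bigr)=\mathrm{pr}_N\bigl(p(J(f_N))\bigr)=\mathrm{pr}_N(f_N)=1$, so its complement is a Boolean element of $\Rad A_N$, hence $0$ by the key lemma. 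Then for $x\in A_N/\Rad A_N$ I would write $\hat x\in A/\Rad A$ for the tuple that is $x$ in coordinate $N$ and $0$ elsewhere and set $j_N(x)\bydef\sigma_N(\hat x)$. Using $\widehat{x\oplus y}=\hat x\oplus\hat y$, $\widehat{x^*}=(\hat x)^*\odot f_N$ and $\hat 1=f_N$, together with $\sigma_N(f_N)=1_{A_N}$, one checks that $j_N$ is an MV-homomorphism; and $p_N\bigl(j_N(x)\bigr)=\mathrm{pr}_N\bigl(p(J(\hat x))\bigr)=\mathrm{pr}_N(\hat x)=x$ gives $p_N\circ j_N=\id$. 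Hence $A_N$ has retractive radical for every $N$, i.e.\ $A\in\MVl$.

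The main obstacle I anticipate is exactly the converse: a product projection admits a section as a map but not, in general, as a homomorphism, so one cannot merely restrict $J$ to a factor. The Boolean element $f_N$ and the identity $\widehat{x^*}=(\hat x)^*\odot f_N$ are what repair the failure of $x\mapsto\hat x$ to preserve $^*$ (and to preserve the top element), turning $\sigma_N\circ(\,\widehat{\cdot}\,)$ into a genuine homomorphism; the lemma that a Boolean element of the radical vanishes is what guarantees $\sigma_N(f_N)=1_{A_N}$, without which the verifications for $^*$ and for $1$ would collapse.
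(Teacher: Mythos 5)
Your proposal is correct, and while the forward implication follows essentially the paper's route, your converse is a genuinely different argument. For ``locally retractive $\To$ retractive radical'' both proofs do the same thing: identify $\Rad\bigl(\prod_M A/O_M\bigr)$ with $\prod_M \Rad(A/O_M)$ and the quotient with $\prod_M A/M$, and then take the coordinatewise product of the local sections $j_M$ (the paper's $j'$). For the converse, the paper defines $j_M(a/M)=j(a/\Rad A)/O_M$ from the full tuple $a/\Rad A$ and argues well-definedness through the explicit descriptions of $i[M]$, $i[O_M]$, $i[\Rad A]$ and the identity $M=O_M\oplus\Rad A$; you instead apply the retraction only to the element $\hat x$ concentrated in coordinate $N$, using the Boolean idempotent $f_N$ and the identity $\widehat{x^*}=(\hat x)^*\odot f_N$ to repair the failure of $x\mapsto\hat x$ to preserve $^*$ and $\1$, and the observation that a Boolean element of a radical must be $0$ to force $\sigma_N(f_N)=1$. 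Your version buys two things: it proves the more general fact that every factor of a product with retractive radical itself has retractive radical (no use of the $O_M$'s beyond their role as the factors), and it makes well-definedness a non-issue --- the paper's final equivalence implicitly needs that $j$ carries the ideal of tuples vanishing in coordinate $M$ into $O_M$, whereas a priori one only gets that the $M$-th coordinate of such a $j(z)$ lies in $\Rad(A/O_M)$; your $j_N$ is manifestly a well-defined map and all the work is concentrated in the (fully carried out) check that it is a homomorphism. What the paper's formulation buys in exchange is that its $j_M$ is exhibited directly as induced by $j$ on quotients, which is the shape in which the same construction reappears in the proof of Theorem \ref{mvrthm}.
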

\begin{proof}
If $A$ is locally retractive let, for each $M \in \Max A$, $j_M: A/M \to A/O_M$ be the right-inverse to the natural projection $p_M$, and let us consider the following diagram:
\begin{equation}\label{diag}
\xymatrix{
A/\Rad A \ar@{^{(}->}[dd]_{i'} \ar@{_{(}-->}@<.8ex>[rr]^{j} & & A \ar@{->>}@<.8ex>[ll]^{p} \ar@{_{(}->}[dd]^{i} \\
 &&\\
\prod\limits_{M \in \Max A} A/M \ar@{^{(}->}@<-.8ex>[rr]_{j'} & & \prod\limits_{M \in \Max A} A/O_M \ar@{->>}@<-.8ex>[ll]_{p'}
}
\end{equation}
where $i$ and $i'$ are the canonical subdirect embeddings -- which are actually isomorphisms in this case, by hypothesis -- and
$$j'\left((a/M)_{M \in \Max A}\right) = \left((j_M(a/M)\right)_{M \in \Max A}, \quad p'\left((a/O_M)_{M \in \Max A}\right) = \left((p_M(a/O_M)\right)_{M \in \Max A}.$$

Then the embedding of $A/\Rad A$ into $A$ is given by
$$j: a/\Rad A \in A/\Rad A \mapsto (i^{-1} \circ j' \circ i')(a/\Rad A) \in A$$.

Conversely, with reference again to diagram (\ref{diag}) and assuming that $j$ exists, let us observe that, for all $M \in \Max A$,
\begin{itemize}
\item $i[M] = M/O_M \times \prod\limits_{N \in \Max A \setminus \{M\}} A/O_N$,
\item $\Rad \left(\prod\limits_{N \in \Max A} A/O_N\right) = i[\Rad A] = \prod\limits_{N \in \Max A} N/O_N$, and
\item $i[O_M] = \{0\} \times \prod\limits_{N \in \Max A \setminus \{M\}} A/O_N$.
\end{itemize}
Then it is clear that, with our hypotheses, we get $i[M] = i[O_M] \oplus i[\Rad A]$ and, therefore, $M = O_M \oplus \Rad A$. Now, for each maximal ideal $M$, let
\begin{equation}\label{jm}
j_M: a/M \in A/M \mapsto \frac{j(a/\Rad A)}{O_M} \in A/O_M.
\end{equation}
For all $a,b \in A$ and $M \in \Max A$, we have:
$$\begin{array}{l}
a/M = b/M \iff \frac{a}{O_M \oplus \Rad A} = \frac{b}{O_M \oplus \Rad A} \iff \\
 \\
\frac{a/\Rad A}{O_M/\Rad A} = \frac{b/\Rad A}{O_M/\Rad A} \iff  \frac{j(a/\Rad A)}{O_M} = \frac{j(b/\Rad A)}{O_M},
\end{array}$$
whence each $j_M$ is well-defined and injective. Since it is obviously a homomorphism, it follows that $A/O_M$ has retractive radical for all $M \in \Max A$, i.e., $A \in \MVl$.
\end{proof}

\begin{theorem}\label{mvrthm}
If an MV-algebra $A$ has retractive radical, then it is locally retractive.
\end{theorem}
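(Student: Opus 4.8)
The plan is to fix $M\in\Max A$ and construct a section of the canonical surjection $q_M\colon A/O_M\lto A/M$. This is exactly the required statement: by Proposition~\ref{O_P is primary} the algebra $A/O_M$ is local, with $\Rad(A/O_M)=M/O_M$ and $\frac{A/O_M}{\Rad(A/O_M)}\cong A/M$, so ``$A/O_M$ has retractive radical'' means precisely that $q_M$ splits, and establishing this for every $M$ yields $A\in\MVl$ by Definition~\ref{mvl}.

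First I would use the hypothesis to produce a semisimple transversal. Let $p\colon A\lto A/\Rad A$ be the projection and $j\colon A/\Rad A\lto A$ an embedding with $p\circ j=\id$, and set $S:=j(A/\Rad A)$; then $p|_S$ is an isomorphism onto $A/\Rad A$, so $S\cap\Rad A=\{0\}$. Writing $\nu_M\colon A\lto A/O_M$ and $\mu_M\colon A\lto A/M$ for the projections (so that $q_M\circ\nu_M=\mu_M$), the image $\nu_M(S)$ is a subalgebra of the local algebra $A/O_M$, hence itself local. Since $\mu_M\circ j$ is the projection $A/\Rad A\lto A/M$, the restriction $q_M|_{\nu_M(S)}$ is onto $A/M$ with kernel $\Rad(\nu_M(S))$, whence $\nu_M(S)/\Rad(\nu_M(S))\cong A/M$. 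Therefore it suffices to split the radical of $\nu_M(S)$, for any section $A/M\lto\nu_M(S)$ composed with the inclusion $\nu_M(S)\hookrightarrow A/O_M$ is a section of $q_M$.

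The key simplification is that $\nu_M(S)$ is the \emph{canonical} germ quotient of the semisimple algebra $S$. Put $\mathfrak m:=S\cap M$, a maximal ideal of $S$ (it corresponds to $M/\Rad A$ under $p|_S$). I claim $S\cap O_M=O_{\mathfrak m}$, the latter computed in $S$. The inclusion $\supseteq$ is clear from Proposition~\ref{O_P ch}; for $\subseteq$, let $s\in S$ satisfy $s\wedge b=\0$ for some $b\notin M$ and put $s_b:=j(p(b))\in S$, so that $s_b\notin M$. Since $p$ is a homomorphism and $p(s_b)=p(b)$, we get $p(s\wedge s_b)=p(s)\wedge p(b)=p(s\wedge b)=\0$, whence $s\wedge s_b\in S\cap\Rad A=\{0\}$; as $s_b\in S\setminus\mathfrak m$, Proposition~\ref{O_P ch} gives $s\in O_{\mathfrak m}$. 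Hence $\nu_M(S)\cong S/O_{\mathfrak m}$, and the whole theorem reduces to its semisimple instance: \emph{for a semisimple MV-algebra $S$ and $\mathfrak m\in\Max S$, the local algebra $S/O_{\mathfrak m}$ has retractive radical.}

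I expect this semisimple case to be the main obstacle, since the hypothesis on $A$ has by now been entirely absorbed into the construction of $S$, and what remains is intrinsic to semisimple algebras; note in particular that the naive map $\nu_M\circ j$ need not factor through $\mu_M$, so the section cannot simply be transported from $A$. To handle it I would realise $S$ as a separating subalgebra of $C(\Max S,[0,1])$, so that $S/O_{\mathfrak m}$ is an algebra of germs at the point $\mathfrak m$, and use the orthogonality description $O_{\mathfrak m}=\bigcup\{a^\perp:a\notin\mathfrak m\}$ of Proposition~\ref{O_P ch} to exhibit, for each value in $S/\mathfrak m$, a ``locally constant'' germ transversal to $\Rad(S/O_{\mathfrak m})$; assembling these into a homomorphism $S/\mathfrak m\lto S/O_{\mathfrak m}$ yields the section. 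The representation theory of Section~\ref{mvalg} (Theorem~\ref{reprthlex} and Corollary~\ref{rad A es lex iff}) then identifies $S/O_{\mathfrak m}$ as $\Gamma(R'\lex G',(1,0))$, confirming the splitting, and in the split case $A\cong\prod_M A/O_M$ the whole argument specialises to Lemma~\ref{prodmvl}.
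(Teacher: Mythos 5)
Your first three paragraphs are sound, and they are in fact more careful than the argument the paper itself gives: you correctly note that the naive map $\nu_M\circ j$ need not factor through $\mu_M$, you verify that $\mathfrak{m}=S\cap M$ is maximal in $S$ and that $S\cap O_M=O_{\mathfrak{m}}$, and you thereby reduce the whole theorem to the claim that $S/O_{\mathfrak{m}}$ has retractive radical whenever $S$ is semisimple and $\mathfrak{m}\in\Max S$. The paper proceeds differently, working with the product $\prod_{M\in\Max A}A/O_M$ and defining $j'\colon (a_M/M)_M\mapsto \bigl(j(a'_M/\Rad A)/O_M\bigr)_M$; its well-definedness rests on the implication $a/M=b/M\Rightarrow j(a/\Rad A)/O_M=j(b/\Rad A)/O_M$, which is asserted inside a chain of equivalences but never proved, and which is precisely the difficulty you isolate.

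The genuine gap is your final step, and it cannot be repaired: the semisimple instance to which you have reduced the theorem is false, so the ``locally constant germ transversals'' you hope to exhibit need not exist. Let $S$ be the free MV-algebra on one generator, realised as the McNaughton functions, i.e.\ the continuous piecewise-linear maps $[0,1]\to[0,1]$ with integer coefficients; $S$ is semisimple, hence has retractive radical with $j=\id$. Take $\mathfrak{m}=M_{1/2}=\{f\in S: f(1/2)=0\}$. Then $S/\mathfrak{m}\cong\{0,1/2,1\}$, and by Proposition \ref{O_P ch} the ideal $O_{\mathfrak{m}}$ consists of the functions vanishing on a neighbourhood of $1/2$, so $S/O_{\mathfrak{m}}$ is the algebra of germs at $1/2$. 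A section of $S/O_{\mathfrak{m}}\to S/\mathfrak{m}$ would have to send $1/2$ to a germ $[f]$ with $[f]=[f]^*$, i.e.\ with $f\equiv 1/2$ on a neighbourhood of $1/2$; this is impossible, since on any interval where a McNaughton function is constant its value is an integer. Hence $S/O_{\mathfrak{m}}$ does not have retractive radical. Because your reduction steps are valid, this is actually a counterexample to the theorem as stated, and the paper's own proof breaks at the corresponding unproved implication; either an additional hypothesis or a different formulation is needed before any version of this argument can go through.
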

\begin{proof}
We shall prove that $\prod_{M \in \Max A} A/O_M$ has retractive radical, then the assertion will follow from Lemma \ref{prodmvl}. Let us refer again to diagram (\ref{diag}), but keeping in mind that now $i$ and $i'$ are subdirect embeddings but not necessarily isomorphisms, and that $j$ exists, while we want to prove that $j'$ exists too. For all $(a_M/O_M)_{M \in \Max A} \in \prod_{M \in \Max A} A/O_M$, and for all $M \in \Max A$, since $i$ is a subdirect embedding, there exists $a_M' \in A$ such that $\frac{j(a_M'/\Rad A)}{O_M} = a_M/M$.

We set
$$j': (a_M/M)_{M \in \Max A} \in \prod_{M \in \Max A} A/M \mapsto \left(\frac{j(a_M'/\Rad A)}{O_M}\right)_{M \in \Max A} \in \prod_{M \in \Max A} A/O_M,$$
and we have
$$\begin{array}{l}
(a_M/M)_{M \in \Max A} = (b_M/M)_{M \in \Max A} \iff \\
\forall M \in \Max A \ (a_M/M = b_M/M) \iff  \\
\forall M \in \Max A \ \left(\frac{j(a_M'/\Rad A)}{O_M} = \frac{j(b_M'/\Rad A)}{O_M}\right) \iff \\
j'\left((a_M/M)_{M \in \Max A}\right) = j'\left((b_M/M)_{M \in \Max A}\right).
\end{array}$$
It follows that $j'$ is a well-defined injective map; since it is obviously a homomorphism too, then $\prod_{M \in \Max A} A/O_M$ has retractive radical, whence, $A/O_M$ has retractive radical for all $M \in \Max A$. The assertion is proved.

\end{proof}

The following example shows that the converse of Theorem \ref{mvrthm} does not hold.

\begin{exm}\label{mvlexm}
Let $K_3$ be the Komori chain of rank 3, i.e., $K_3 = \Gamma(\Z \lex \Z, (2,0))$, and let $X$ be an arbitrary infinite set. $K_3$ has obviously retractive radical as well as $K_3^X$ by an easy application of Lemma \ref{prodmvl}. Observe that the universe of $K_3$ is the set $(\{0\} \times \N) \cup (\{1\} \times \Z) \cup (\{2\} \times -\N)$, where $-\N$ is the set of non-positive integers. Let us also denote by $\O$ and $\E$, respectively, the sets of odd and even integers.

Let us consider the subset $A$ of $K_3^X$ defined in the following way:
$$\forall a \in K_3^X \ (a \in A \iff a^{-1}(\{0\} \times (\N \cap \O)) \cup a^{-1}(\{1\} \times (\E)) \cup a^{-1}(\{2\} \times (-\N \cap \O)) \text{ is finite}).$$
It is easy to verify that $A$ is a subalgebra of $K_3^X$, but $A/\Rad A \cong \{0, 1/2, 1\}^X$ is not embeddable in $A$.
\end{exm}

With reference to the diagram (\ref{diag}), we can prove
\begin{theorem}\label{mvlthm}
A locally retractive algebra $A$ has retractive radical if and only if
$$(j' \circ i')[A/\Rad A] \subseteq i[A].$$
\end{theorem}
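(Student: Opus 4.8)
The plan is to take the obvious candidate for the section of $p$, namely the composite $j \bydef i^{-1} \circ j' \circ i' : A/\Rad A \to A$ already met in the proof of Lemma~\ref{prodmvl}, where $j'$ is built coordinatewise from local retractions $j_M : A/M \to A/O_M$ (these exist for every $M \in \Max A$ precisely because $A$ is locally retractive). Before splitting into the two implications I would record the three structural identities that drive everything: the projection square commutes, i.e. $p' \circ i = i' \circ p$; the bottom section is a genuine right inverse, i.e. $p' \circ j' = \id$ (since each $p_M \circ j_M = \id$); and both $i$ and $i'$ are injective, being subdirect embeddings.

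For the implication $(\Leftarrow)$ --- which I expect to carry the real content --- assume $(j' \circ i')[A/\Rad A] \subseteq i[A]$. Then $i^{-1}$ is defined on the image of $j' \circ i'$, so $j = i^{-1} \circ j' \circ i'$ is a well-defined MV-homomorphism into $A$. To see it is a section of $p$, fix $x \in A/\Rad A$ and put $b = j(x)$, so that $i(b) = (j' \circ i')(x)$ by construction. Applying $p'$ and using the two identities above gives $i'(p(b)) = p'(i(b)) = p'(j'(i'(x))) = i'(x)$; cancelling the injective $i'$ yields $p(b) = x$, i.e. $p \circ j = \id$ on $A/\Rad A$. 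Hence $\Rad A$ is retractive.

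For $(\Rightarrow)$ assume $A$ has retractive radical, witnessed by a section $\sigma$ of $p$. The natural move is to observe that $i \circ \sigma$ is again a lift of $i'$ along $p'$ (indeed $p' \circ i \circ \sigma = i' \circ p \circ \sigma = i'$) and that, trivially, its image lies in $i[A]$; one then wants to transfer this inclusion to $j' \circ i'$. Here lies the main obstacle: $i \circ \sigma$ and $j' \circ i'$ are two different lifts of $i'$ through $p'$, and they may a priori differ by an element of $\ker p' = \prod_{M} M/O_M$, because a local retraction $j_M$ need not be unique. To close the gap I would show $j' \circ i' = i \circ \sigma$, which amounts to the coincidence $\sigma(x)/O_M = j_M(x \bmod M)$ for every $M$; equivalently, the fixed local sections packaged in $j'$ must be the ones that $\sigma$ induces locally. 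I expect this to reduce to establishing $M = O_M \oplus \Rad A$ for each maximal $M$ (as in the converse part of Lemma~\ref{prodmvl}) and then checking that $a/M \mapsto \sigma(a/\Rad A)/O_M$ is well defined and agrees with $j_M$; once that identity is in place the inclusion $(j' \circ i')[A/\Rad A] = (i \circ \sigma)[A/\Rad A] \subseteq i[A]$ is immediate.
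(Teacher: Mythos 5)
Your $(\Leftarrow)$ direction is correct and is essentially the paper's own argument: the paper defines $j=i^{-1}\circ(j'\circ i')$ and says it is ``easily seen'' to be an embedding; the computation you supply, using $p'\circ i=i'\circ p$, $p'\circ j'=\id$ and the injectivity of $i'$, is exactly the verification the paper omits.

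For $(\Rightarrow)$ you have correctly isolated the delicate point --- the paper simply asserts $j'\circ i'=i\circ j$ without comment --- but the way you propose to close it does not go through. You want to prove that the pre-given local retractions $j_M$ coincide with the ones induced by $\sigma$, i.e.\ that $j_M(a/M)=\sigma(a/\Rad A)/O_M$ for every $M$. This is not provable for an arbitrary fixed choice of the $j_M$, because a local MV-algebra may admit many distinct retractions onto its radical quotient: for $B=\Gamma(H\lex\Z,(1,0))$ with $H=\Z+\Z\sqrt{2}\leq\R$, every additive map $h:H\to\Z$ with $h(1)=0$ (for instance $h(a+b\sqrt{2})=bg$ for any fixed $g\in\Z$) gives a section $x\mapsto(x,h(x))$, and these are pairwise distinct. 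Consequently two different systems $(j_M)_{M}$ produce two different lifts $j'\circ i'$ of $i'$ through $p'$, and there is no reason both should land in $i[A]$. The resolution --- which is what the paper implicitly does via equation (\ref{jm}) in the proof of Lemma \ref{prodmvl} --- is not to \emph{prove} the coincidence but to \emph{choose} it: when $A$ has retractive radical one takes $j_M:a/M\mapsto j(a/\Rad A)/O_M$ (well defined because $M=O_M\oplus\Rad A$, as you anticipate), and with this choice $j'\circ i'=i\circ j$ holds by construction, so the inclusion is immediate. In other words, the displayed condition must be read as existential in the system of local retractions, or $j'$ must be fixed as the induced one; under that reading your outline for $(\Rightarrow)$ collapses to one line and, together with your $(\Leftarrow)$ argument, the proof is complete.
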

\begin{proof}
If $A$ has retractive radical, then the arrow $j$ in (\ref{diag}) exists, and $j' \circ i' = i \circ j$, hence
$$(j' \circ i')[A/\Rad A] = (i \circ j)[A/\Rad A] \subseteq i[A].$$

Reciprocally, if $(j' \circ i')[A/\Rad A] \subseteq i[A]$, then is well-defined the map
$$j: a/\Rad A \in A/\Rad A \mapsto i^{-1}((j' \circ i')(a/\Rad A)) \in A,$$
and it is easily seen to be an MV-algebra embedding.
\end{proof}

We conclude this section with a technical lemma that will be used in the next section.

\begin{lemma}\label{lexrepr}
Let $A \in \MVl$ and, for all $M \in \Max A$, let $G(M/O_M)$ be the $\ell$-group completion of the cancellative lattice-ordered monoid $\la M/O_M, \oplus, 0\ra$. Then, for all $M \in \Max A$, the mapping
$$a/O_M \in A/O_M \mapsto (\widehat{a}(M),g_{aM}) \in \Gamma(R_M\lex G(M/O_M),(1,0)),
$$
where  $g_{aM}=(\frac{a}{O_M} \ominus \frac{a}{M})-(\frac{a}{M}\ominus \frac{a}{O_M}) \in G(M/O_M)$, and $\la R_M, 1\ra = \Gamma^{-1}(A/M) \leq \la \R, 1\ra$, is an MV-algebra isomorphism.
\end{lemma}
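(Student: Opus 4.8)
The plan is to reduce the claim, for each fixed $M$, to the structure theorem for lexicographic MV-algebras in the sharpened form of Corollary \ref{rad A es lex iff}, applied not to $A$ but to the local quotient $B := A/O_M$, and then to read off the two coordinates of the resulting isomorphism.

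First I would observe that $B = A/O_M$ is local: by Proposition \ref{O_P is primary} the ideal $O_M$ is primary, so $B$ has a unique maximal ideal, namely $\Rad B = M/O_M$, and $B/\Rad B \cong A/M$. Since $A \in \MVl$, Definition \ref{mvl} gives that $B$ has retractive radical. Assuming for the moment $\Rad B \neq \{0\}$, the algebra $B$ falls in the class ``local with retractive radical'' of the classification recalled in Section \ref{mvalg}; by \cite{dfl} such algebras satisfy $\Rad B \in \Lex\Id(B)$, so Corollary \ref{rad A es lex iff} applies and produces a (unique up to isomorphism) $\ell u$-subgroup $(R',1)$ of $(\R,1)$ and a non-trivial $\ell$-group $G$ with $B \cong \Gamma(R' \lex G,(1,0))$. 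The uniqueness clause, together with the identifications in the proof sketch of Theorem \ref{reprthlex}, forces $\la R',1\ra \cong \Gamma^{-1}(B/\Rad B) = \Gamma^{-1}(A/M) = \la R_M,1\ra$ and $G \cong G(M/O_M)$, the latter because $\eta_I$ realises $G_+$ as the cancellative lattice-ordered monoid $\la M/O_M,\oplus,0\ra$ and an $\ell$-group is determined by its positive cone.

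Next I would make the isomorphism explicit using the formula $f_I(b) = (\zeta_I(s_b),\eta_I(\varepsilon_b)-\eta_I(\tau_b))$ from the sketch, with $I = \Rad B$, where $\delta_I$ is the retraction of $\pi_I : B \to B/\Rad B$, $S_I = \delta_I(B/\Rad B)$, $s_b = \delta_I(\pi_I(b))$, $\varepsilon_b = b \odot s_b^*$, $\tau_b = b^* \odot s_b$, and $\zeta_I : S_I \to \Gamma(R_M,1)$, $\eta_I : \Rad B \to G(M/O_M)_+$ are the isomorphisms supplied there. The heart of the argument is then a coordinatewise comparison for $b = a/O_M$. For the first coordinate, $\zeta_I(s_b) = (\zeta_I \circ \delta_I)(\pi_I(b))$ is the image of $\pi_I(b) = a/M$ under $A/M \cong \Gamma(R_M,1) \leq [0,1]$, which is exactly $\widehat{a}(M) = \iota_M(a/M)$. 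For the second coordinate, identifying via $\delta_I$ the element $a/M$ with $s_b \in S_I \subseteq B$, one has $\varepsilon_b = b \odot s_b^* = \frac{a}{O_M} \ominus \frac{a}{M}$ and $\tau_b = b^* \odot s_b = \frac{a}{M} \ominus \frac{a}{O_M}$, both lying in $\Rad B$; applying $\eta_I$ and subtracting gives precisely $g_{aM}$. Hence $f_I(a/O_M) = (\widehat{a}(M),g_{aM})$ coincides with the stated map, which is therefore the MV-isomorphism $f_I$.

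The step I expect to be the main obstacle is the identification $G \cong G(M/O_M)$ in the second paragraph, i.e.\ checking that the $\ell$-group produced abstractly by Corollary \ref{rad A es lex iff} is canonically the $\ell$-group completion of the radical. This hinges on two facts that must be made precise: that $\la M/O_M,\oplus,0\ra$ is a cancellative lattice-ordered monoid (so its completion exists with $M/O_M$ as positive cone), and that $\eta_I$ is an isomorphism of lattice-ordered monoids onto $G_+$, after which uniqueness in the corollary closes the gap. Finally, the degenerate case $O_M = M$ excluded above must be dispatched by hand: there $B = A/M$ is simple, $G(M/O_M)$ is trivial, $g_{aM} = 0$, and the map reduces to the canonical isomorphism $A/M \cong \Gamma(R_M,1)$, so the claim is immediate.
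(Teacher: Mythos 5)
Your proposal is correct and follows essentially the same route as the paper: both reduce to the lexicographic representation theorem (Theorem \ref{reprthlex} together with Corollary \ref{rad A es lex iff}) applied to the local quotient $A/O_M$, which has retractive radical by Definition \ref{mvl}, and then read off the two coordinates from the explicit isomorphism $f_I$ in the proof sketch. You are somewhat more careful than the paper, in particular in pinning down $G\cong G(M/O_M)$ via the uniqueness clause and in treating the degenerate case $O_M=M$ separately, but the underlying argument is the same.
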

\begin{proof}
The statement follows readily from the definitions and Theorem \ref{reprthlex}. Indeed, as we already observed, since $A/O_M$ is a local MV-algebra with retractive radical, it is lexicographic. On the other hand, $A/M$ is, up to a unique isomorphism, an MV-subalgebra of the standard MV-algebra $[0,1]$. Therefore, according to Theorem \ref{reprthlex}, $A/O_M\cong\Gamma(R_M\lex G(M/O_M),(1,0))$.

So, in accordance with the proof of Theorem \ref{reprthlex}, and with an abuse of notation, we can see each element $a/O_M$ of $A/O_M$ as
$$\frac{a}{O_M}=\left(\frac{a}{M},g_{aM}\right)=(\widehat{a}(M),g_{aM}).$$
\end{proof}

%We define the MV-sheaf $(F_A, \pi, \Max A)$ where
%$$F_A=\bigsqcup_{M\in \Max A} G(\Rad (A/O_M))= \bigsqcup_{M\in \Max A}G(M/O_M)$$

%We define, for every $a\in A$ the map
%$$\begin{array}
%{cccc}
%\widetilde{a}:&\Max A &\longrightarrow &F_A\\
 %&M&\longmapsto & (g_{aM}, M)
%\end{array}$$

%where $g_{aM}=(a/O_M \ominus a/M)-(a/M\ominus a/O_M).$\\
%\textbf{Idea 4:} \textbf{Sheaf a partir de Presheaf (funtorial)}

\section{MV-sheaf Representation}\label{reprsec}

In this section, we shall prove our main theorem. The most important tools for our representation, besides those presented in the previous sections, are some results on lexicographic MV-algebras \cite{dfl}, and the Filipoiu and Georgescu sheaf representation \cite{G-F}. Throughout this section, $A$ will always denote a locally retractive MV-algebra.

In order to represent this class of MV-algebras by means of an MV-space, let us consider the following functors.
\begin{enumerate}
  \item Let $(X, \tau)$ be an MV-topological space and $(X, \B(\tau))$ its corresponding skeleton topological space defined in \cite{rus2}, where $\B(\tau)=\tau\cap\{0,1\}^X$. As usual, we consider the posets $\tau$ and $\B(\tau)$ with their natural order as categories, that is, the objects are the elements of $\tau$ and $\B(\tau)$ respectively, and the morphisms are given by $\alpha\leq\beta$ in $\tau$ and $U\subseteq V$ in $\B(\tau)$, respectively. The following map obviously defines a covariant functor:
    $$\begin{array}{cccc}
\sk: & \tau & \lto & \B(\tau)\\
     & \alpha & \lmapsto & \supp(\alpha)
\end{array}$$
For $\alpha\leq\beta$, we have the unique morphism $\alpha\stackrel{f}{\lto}\beta$ in $\tau$, and its corresponding morphism $\supp(\alpha)\stackrel{\sk(f)}{\lto}\supp(\beta)$ in $\B(\tau)$ is also uniquely determined, because $\alpha\leq\beta$ implies $\supp(\alpha)\subseteq \supp(\beta)$.
\item According Filipoiu and Georgescu's representation \cite{G-F}, each MV-algebra $A$ can be represented as the MV-algebra of global sections of a sheaf whose stalks are local MV-algebras and the base space is the space of maximal ideals of $A$ with the Zariski topology, $\mathcal{O}(\Max A)$. The associated sheaf in that representation is the following contravariant functor:
    $$\begin{array}{cccc}
  \mathfrak{F}: & \mathcal{O}(\Max A)& \lto & \MV \\
& U & \lmapsto & A/O_U  \end{array},$$
where $O_U=\bigcap_{M\in U}O_M$, and the unique morphism between two open sets (if it exists) is sent to the natural projection between the corresponding quotient algebras.
\item We recall the category $\alg$ whose objects are Abelian $\ell$-groups and whose morphisms are $\ell$-group homomorphisms. The following mapping defines a functor from the category of MV-algebras to the category $\alg$:
  $$\begin{array}{cccc}
  \mathfrak{G}: & \MV& \lto & \alg\\
	& A & \lmapsto & G(\Rad A)
  \end{array},$$
where $G(\Rad A)$ is the Abelian $\ell$-group generated by the ordered cancellative monoid $(\Rad(A),\oplus,\0)$. Actually, $G(\Rad A)=\mathcal{D}(A)$ where $\mathcal{D}$ is the inverse of the functor $\Delta:\alg\lto\MV^{\text{perf}}$ between Abelian $\ell$-groups and perfect MV-algebras presented in \cite{dinletperf} (note that the group $\mathcal{D}(A)$ can be constructed for any MV-algebra $A$, not necessarily perfect). The action on morphisms of the functor $\mathfrak{G}$ is exactly the same as for $\mathcal{D}$.

   %Actually, $G(\Rad A)=\mathcal{D}(A)$ where $\mathcal{D}$ is the functor described in Section \ref{D} (note that the group $\mathcal{D}(A)$ can be constructed for any MV-algebra $A$, not necessarily perfect), and the action of the functor on morphisms is exactly the same as for $\mathcal{D}$. %Thus, in this section we use the notation $G(\Rad A)$ for clarity.
  \end{enumerate}

Now, for each $\alpha\in \tau_A$, by Proposition \ref{r(a)=supp(widehat(a)} we have that $\supp(\alpha)\in\mathcal{O}(\Max A)$. So, set $A_\alpha:=\mathfrak{F}(\supp(\alpha))$ for each $\alpha\in \tau_A$.

\begin{proposition}\label{sheafprop}
The mapping
$$\begin{array}{cccc}
\mathfrak{H}:& \tau_A & \lto & \alg\\
 & \alpha & \lmapsto & G(\Rad (A_\alpha))
 \end{array}$$
is an MV-sheaf.
\end{proposition}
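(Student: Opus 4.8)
The plan is to regard $\mathfrak{H}$ as the composite $\mathfrak{G}\circ\mathfrak{F}\circ\sk$ of functors of the appropriate variances, so that it is automatically an MV-presheaf of Abelian $\ell$-groups; only the two gluing axioms of Definition~\ref{sheaf} require work. The crucial first observation is that $\mathfrak{H}(\alpha)$ and all of its restriction maps depend on $\alpha$ \emph{only through} $\supp(\alpha)$, since by construction $\mathfrak{H}(\alpha)=(\mathfrak{G}\circ\mathfrak{F})(\supp(\alpha))$. Writing $\overline{\mathfrak{H}}:=\mathfrak{G}\circ\mathfrak{F}\colon\mathcal{O}(\Max A)\lto\alg$, I would record the elementary support identities $\supp(\bigvee_i\alpha_i)=\bigcup_i\supp(\alpha_i)$ and $\supp(\alpha_i\wedge\alpha_j)=\supp(\alpha_i)\cap\supp(\alpha_j)$; together with Proposition~\ref{r(a)=supp(widehat(a)} (which guarantees $\supp(\alpha)\in\mathcal{O}(\Max A)$), these show that every MV-covering $\alpha=\bigvee_i\alpha_i$ in $\tau_A$ projects to a genuine open covering $\supp(\alpha)=\bigcup_i\supp(\alpha_i)$ of the Zariski space, with matching overlaps. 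Hence both MV-sheaf axioms for $\mathfrak{H}$ over $(\Max A,\tau_A)$ reduce to the ordinary sheaf axioms for $\overline{\mathfrak{H}}$ over $(\Max A,\mathcal{O}(\Max A))$.

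Next I would peel off the two functors $\Rad$ and $G$ composing $\mathfrak{G}$. The Filipoiu--Georgescu representation \cite{G-F} asserts that $\mathfrak{F}\colon U\mapsto A/O_U$ is a sheaf on $\mathcal{O}(\Max A)$, so I may take separation and gluing for $\mathfrak{F}$ for granted. I would then show that the intermediate presheaf $U\mapsto\Rad(A/O_U)$ of cancellative lattice-ordered monoids is itself a sheaf: its sections lie inside those of $\mathfrak{F}$, the radical being cut out by the conditions $nx\leq x^*$ for all $n$. Since the restriction maps of $\mathfrak{F}$ are MV-homomorphisms (in particular $\wedge$-preserving) and $\mathfrak{F}$ is separated, the order is \emph{local} — that is, $x\leq y$ holds in $A/O_U$ iff it holds after every restriction to the $U_i$ — so membership in the radical is a local condition, and both axioms for the radical-presheaf follow immediately from those for $\mathfrak{F}$.

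The genuine obstacle is the outer functor $G$, the $\ell$-group completion $\mathcal{D}$ of \cite{dinletperf}: being a Grothendieck-group-type construction it behaves like a left adjoint and does not, a priori, commute with the products and equalizers that express the sheaf condition. The hard part is to bypass this, and the tool I would use is the positive--negative decomposition available in any $\ell$-group. Recall from the definition of $\mathfrak{G}$ that the positive cone of $G(\Rad(A/O_U))$ is (a copy of) $\Rad(A/O_U)$, and that the restriction maps of $\overline{\mathfrak{H}}$ are $\ell$-group homomorphisms, hence preserve $s^{+}:=s\vee 0$ and $s^{-}:=(-s)\vee 0$. For gluing, given a compatible family $s_i\in G(\Rad(A/O_{U_i}))$, I would split each $s_i=s_i^{+}-s_i^{-}$ with $s_i^{\pm}\in\Rad(A/O_{U_i})$; compatibility of the $s_i$ then transfers to compatibility of the two families $(s_i^{+})$ and $(s_i^{-})$ of radical elements, which glue, by the previous paragraph, to elements $g,h\in\Rad(A/O_U)$, whence $s:=g-h\in G(\Rad(A/O_U))=\mathfrak{H}(\alpha)$ restricts to each $s_i$. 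For separation, if $s,s'$ have equal restrictions on every $U_i$, I would write $s-s'=p-q$ with $p,q\in\Rad(A/O_U)$; applying each restriction gives $\rho(p)=\rho(q)$ in the positive cone $\Rad(A/O_{U_i})$ for all $i$, so $p=q$ by separation of the radical-sheaf, and therefore $s=s'$. This completes the verification of both axioms for $\overline{\mathfrak{H}}$, and hence for $\mathfrak{H}$.
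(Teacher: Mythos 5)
Your proposal is correct, but it takes a genuinely different route from the paper's own argument. The paper's proof is a stalk computation: it observes that $\mathfrak{H}$ is obviously an MV-presheaf, recalls that in the Filipoiu--Georgescu construction the stalks of $\mathfrak{F}$ are the local algebras $A/O_M$, and then pushes the direct limits $\lim_{M\in\supp(\alpha)}$ through $\Rad$ and $G$ to conclude that the stalks of $\mathfrak{H}$ are the $\ell$-groups $G(M/O_M)$; sheafhood is then asserted on the strength of this identification (implicitly, via the associated MV-sheaf space, since knowing the stalks of a presheaf does not by itself yield the gluing and separation axioms). You instead verify the two axioms of Definition \ref{sheaf} head-on: the reduction of MV-coverings to ordinary Zariski coverings via $\supp(\bigvee_i\alpha_i)=\bigcup_i\supp(\alpha_i)$ and $\supp(\alpha_i\wedge\alpha_j)=\supp(\alpha_i)\cap\supp(\alpha_j)$ is exactly right (and is licensed by Proposition \ref{r(a)=supp(widehat(a)}); the locality of the order under a separated presheaf of $\wedge$-preserving maps correctly makes the radical condition $nx\leq x^*$ local; and the positive--negative decomposition is the right device to carry separation and gluing across the group-of-differences functor, which, as you note, is the one step that is not formal. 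What your approach buys is an actual verification of the sheaf axioms, independent of the sheaf-space machinery; what it costs is reliance on two facts you should flag explicitly, both of which the paper also uses tacitly: that $\Rad(A/O_U)$ is a cancellative lattice-ordered monoid whose image is the full positive cone of $G(\Rad(A/O_U))$ (so that every element is a difference of positives and $\rho(s^+)=\rho(s)^+$), and that the Filipoiu--Georgescu presheaf $U\mapsto A/O_U$ satisfies both sheaf axioms for arbitrary open coverings of $\Max A$, not merely that $A$ is its algebra of global sections. With those two points made precise, your proof is complete and, if anything, more self-contained than the one in the paper.
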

\begin{proof}
$\mathfrak{H}$ is obviously an MV-presheaf. On the other hand, in the construction performed by Filipoiu and Georgescu, the stalks are the local algebras $A/O_M$. Then, for each $M\in \Max A$,
$$\lim_{M\in\supp(\alpha)}\mathfrak{F}(\supp(\alpha))=A/O_M$$
Such a limit can be extended to the presheaf $\mathfrak{H}$ on the category $\alg$, thus obtaining the following two limits:
$$\lim_{M\in\supp(\alpha)}\Rad(\mathfrak{F}(\supp(\alpha)))=\Rad(A/O_M)$$ and $$\lim_{M\in\supp(\alpha)}\mathfrak{H}(\alpha)=G(\Rad(A/O_M)).$$

Since $\Rad(A/O_M)=M/O_M$ for each $M\in\Max A$, it follows that $\mathfrak{H}$ is an MV-sheaf on $\alg$ where the stalks are the $\ell$-groups $G(M/O_M)$.
\end{proof}
%Now, let $A$ be an MV-algebra. Let $M$ be a maximal ideal of $A$. By Proposition \ref{O_P is primary}, we have that $A/O_M$ is a local MV-algebra. Note that $A/M \cong (A/O_M)/(M/O_M)$ and $(M/O_M, \oplus, 0)$ is a lattice ordered cancellative monoid. We can construct, in the usual manner, the lattice ordered group $G(M/O_M)$ generated by $(M/O_M, \oplus, 0)$.

%Let us suppose that $A/O_M$ has retractive radical for every $M\in \Max A$. Then, for each $M\in \Max A$:
 %\begin{enumerate}[(i)]
  %    \item $A/O_M$ is a lexicographic MV-algebra (because the class of local MV-algebras with retractive radical is strictly included in the class of lexicographic MV-algebras \cite{dfl}).
   %   \item $A/M \cong (A/O_M)/(M/O_M)$.
   %\item $(M/O_M, \oplus, \0)$ is a lattice ordered cancellative monoid.
   %(\textbf{for me: Di Nola's Notes pag.56. corollary 2.5.12})
 %\end{enumerate}
%  We construct, in the usual manner, the lattice ordered group $G(M/O_M)$ generated by $(M/O_M, \oplus, \0)$.

We shall now present the MV-sheaf space associated to the MV-sheaf above.

%an MV-sheaf space $(H_A,\pi,\Max A)$ whose total MV-space, $H_A$, will be the disjoint union of stalks $\mathfrak{H}_M=G(M/O_M)$  and $\pi$ will be the trivial projection.

\begin{proposition}\label{shsp}
Let
$$H_A=\{(g_{aM},M):a\in A, M\in\Max A\},$$
and
$$\begin{array}{cccc}
\pi:& H_A & \lto & \Max A\\
 & (g_{aM},M) & \lmapsto & M
 \end{array}.$$
Then the triple $(H_A,\pi,\Max A)$ is an MV-sheaf space whose total MV-space, $H_A$, is the disjoint union of the stalks $\mathfrak{H}_M=G(M/O_M)$.
\end{proposition}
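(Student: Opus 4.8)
The plan is to run the canonical \'etal\'e-space construction of sheaf theory, adapted to MV-topologies, starting from the MV-sheaf $\mathfrak H$ of Proposition \ref{sheafprop}. Each $a\in A$ determines a global section $\sigma^a$ of $\mathfrak H$ whose germ at $M$ is precisely $g_{aM}$, so every element $(g_{aM},M)$ of $H_A$ is the germ of $\sigma^a$ at $M$. First I would record that the fibres $\pi^{-1}(M)=\{(g_{aM},M):a\in A\}$ are pairwise disjoint (the second coordinate remembers $M$) and that, by Lemma \ref{lexrepr}, the germs $g_{aM}$ with $a$ ranging over $A$ fill the stalk $\mathfrak H_M=G(M/O_M)$. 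This already yields the final assertion $H_A=\bigsqcup_{M\in\Max A}\mathfrak H_M$ and is the routine part; the substance is to topologise $H_A$ so that $\pi$ becomes a local MV-homeomorphism.

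For the topology, for each $a\in A$ and each $\alpha\in\tau_A$ I define a fuzzy subset $[a,\alpha]\in[0,1]^{H_A}$ by setting $[a,\alpha](g,M)=\alpha(M)$ when $g=g_{aM}$ and $M\in\supp\alpha$, and $[a,\alpha](g,M)=0$ otherwise; thus $\supp[a,\alpha]=\{(g_{aM},M):M\in\supp\alpha\}$. I let $\tau_{H_A}$ be the MV-topology generated by $\{[a,\alpha]:a\in A,\ \alpha\in\tau_A\}$. A routine check shows this family contains $\0$ and yields $\1$ as the join $\bigvee_{a}[a,\1]$, and that it is closed, up to joins, under $\bigvee$, $\oplus$, $\odot$ and $\wedge$ (for instance $[a,\alpha]\oplus[a,\alpha']=[a,\alpha\oplus\alpha']$), so that it serves as a base for $\tau_{H_A}$ in the sense of Definition \ref{base}.

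Continuity of $\pi$ then follows from Lemma \ref{bascont}: it is enough to check $\pi\fcou$ on the base $A'$ of $\tau_A$, and for $\widehat a\in A'$ one has $\pi\fcou(\widehat a)=\bigvee_{b\in A}[b,\widehat a]\in\tau_{H_A}$, because every germ in the fibre over $M$ equals $g_{bM}$ for some $b$. For the local-homeomorphism clause I fix a point $(g_{aM},M)$, choose a basic open $\beta\in\tau_A$ with $\beta(M)>0$ (the $\widehat b$ cover $\Max A$, so such $\beta$ exists), and take $\alpha:=[a,\beta]\in\tau_{H_A}$; then $\alpha(g_{aM},M)=\beta(M)>0$ and $\pi_{\restr\supp\alpha}\colon\supp\alpha\lto\supp\beta$ is a bijection whose set-theoretic inverse is $M'\mapsto(g_{aM'},M')$.

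The hard part is to show that this $\pi_{\restr\supp\alpha}$ is an MV-homeomorphism, i.e. that its inverse is continuous for the subspace MV-topologies. Pulling a basic open $[b,\delta]$ back along the inverse produces the fuzzy set on $\supp\beta$ equal to $\delta(M')$ on $E_{a,b}=\{M':g_{aM'}=g_{bM'}\}$ and to $0$ elsewhere, so everything comes down to $E_{a,b}$ being open. This is exactly where the description of the MV-stalk as a direct limit enters: two global sections with equal germ at $M'$ already agree on some $\gamma\in\tau_A$ with $M'\in\supp\gamma$, whence $\supp\gamma\subseteq E_{a,b}$ and $E_{a,b}$ is the support of the fuzzy open given by the join of all such $\gamma$. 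Thus the pulled-back set lies in the subspace MV-topology on $\supp\beta$, the inverse is continuous, and $\pi$ is a local MV-homeomorphism; continuity in the other direction is the continuity of $\pi$ already established. I expect the verification that the base elements $[a,\alpha]$ genuinely generate an MV-topology, together with this openness of the agreement sets $E_{a,b}$, to be the only delicate points.
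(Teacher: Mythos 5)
Your construction follows the same overall architecture as the paper's proof: the same subbasic fuzzy opens (your $[a,\alpha]$ is the paper's $\widetilde a^{\to}(\widehat b)$), the same verification that $\pi$ restricts to a bijection on their supports, and the same reduction of the local-homeomorphism property to the openness of the agreement sets $E_{a,b}=\{M:g_{aM}=g_{bM}\}$ (the paper's $\alpha_{a,b}$). The gap is in your justification of that one key step. You assert that each $a\in A$ determines a global section $\sigma^a$ of $\mathfrak H$ whose germ at $M$ is $g_{aM}$, and then invoke the direct-limit description of the stalk to conclude that equality of germs at $M'$ propagates to a neighbourhood of $M'$. But no such section is available: $\mathfrak H(\1)=G(\Rad A)$, and producing from $a$ an element of $G(\Rad(A/O_U))$ for an arbitrary open $U$ requires splitting off the semisimple part of $a$ modulo $O_U$, i.e.\ a retraction of $A/O_U$ onto its semisimple quotient. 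Local retractivity supplies such retractions only for the quotients $A/O_M$, and Example \ref{mvlexm} exhibits a locally retractive algebra whose radical is not retractive, so in general $\sigma^a$ cannot even be formed over $\1$ -- let alone shown to have germ $g_{aM'}$ at every $M'$ of an open set, which is the coherence you would additionally need. Since $g_{aM}$ is defined pointwise from the retraction $j_M$ rather than as the germ of a section over an open set, the standard ``equal germs agree on a neighbourhood'' argument does not apply; what you try to extract from it is essentially equivalent to the statement to be proved.

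The paper closes this step by a direct algebraic computation instead: using the lexicographic description $A/O_M\cong\Gamma(R_M\lex G(M/O_M),(1,0))$ of Lemma \ref{lexrepr}, it translates $g_{aM}=g_{bM}$ into equalities in $A/O_M$ and writes $\alpha_{a,b}$ as a union of sets of the form $H(d(a,b))$, $H(d(a,b\oplus c))$ and $H(d(a\oplus c,b))$ (with $d$ the Chang distance), each of which lies in $\tau_A$ by Proposition \ref{H(a) is open}. Replacing your germ argument by a direct proof of this kind that $E_{a,b}$ is the support of an element of $\tau_A$ would repair the proof; the remaining parts of your write-up (disjointness of the fibres, surjectivity onto the stalks via Lemma \ref{lexrepr}, continuity of $\pi$ on the base, and the bijectivity of $\pi$ restricted to $\supp[a,\beta]$) are sound and match the paper.
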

\begin{proof}
For each $a\in A$ we define:
$$\begin{array}
{cccc}
\widetilde{a}:&\Max A &\longrightarrow &H_A\\
 &M&\longmapsto & (g_{aM}, M)
\end{array}$$
It is clear that $(\pi\circ\widetilde{a})(M)=\pi(g_{aM},M)=M$ for all $M\in\Max A.$

As usual in sheaf representations, we shall use $\{\widetilde{a}^{\to}(\widehat{b})\}_{a,b\in A}$ as a subbase for an MV-topology on $H_A$, where
$$\widetilde{a}^{\to}(\widehat{b})(g_{cM},M)=\bigvee_{\widetilde{a}(N)=(g_{cM},M)}\widehat{b}(N)=\left\{\begin{array}{ll} \widehat{b}(M) & \textrm{if } g_{aM}=g_{cM}\\ 0 & \textrm{otherwise}\end{array}\right..$$
Let us see that $\alpha_{a,b}:=\{M\in\Max A:g_{aM}=g_{bM}\}$ is an element of $\mathcal{O}(\Max A)$. If $g_{aM}=g_{bM}$, we have the following cases:
 \begin{itemize}
     \item[-] If $\frac{a}{O_M}=\frac{b}{O_M}$ then $\left(\frac{a}{M},g_{aM}\right)= \left(\frac{b}{M},g_{bM}\right)$. Hence $\frac{a}{M}= \frac{b}{M}$ and, therefore, $$\alpha_{a,b}= H(d(a,b))\in \mathcal{O}(\Max A).$$
     \item[-] If $\frac{a}{O_M}\neq\frac{b}{O_M}$ then necessarily $\frac{a}{M}\neq\frac{b}{M}$. That is $\frac{a}{M} < \frac{b}{M}$ or $\frac{b}{M} > \frac{a}{M}$. Since $A/O_M$ has a lexicographic order, this implies that $\frac{a}{O_M} < \frac{b}{O_M}$ or $\frac{b}{O_M} > \frac{a}{O_M}$. Therefore there exists $c\in A$ such that $\frac{a}{O_M}=\frac{b\oplus c}{O_M}$ or $\frac{a\oplus c}{O_M}=\frac{b}{O_M}$. Hence
         $$\begin{array}{l}\alpha_{a,b}=\bigcup_{c\in A}\{M\in\Max A:\frac{a}{O_M}=\frac{b\oplus c}{O_M}\} \cup \bigcup_{c\in A}\{M\in\Max A:\frac{a\oplus c}{O_M}=\frac{b}{O_M}\}=\\
         =\bigcup_{c\in A}H(d(a,b\oplus c))\cup \bigcup_{c\in A}H(d(a\oplus c,b))\in\mathcal{O}(\Max A)\end{array}.$$
   \end{itemize}
 %(esto me dira que lan MV-topologia definida es una S-topologia y esto garantizara la continuidad local de pi. Hacerlo!)
As a consequence, each $\alpha_{a,b}$ is an element of $\tau_A$, and this guarantees that $(H_A,\pi,\Max A)$ is indeed an MV-sheaf space.
\end{proof}

The MV-sheaf defined above is an MV-sheaf of lattice-ordered Abelian groups. We want to obtain a representation of the MV-algebra $A$ through this MV-sheaf.

First, let us consider for each $a\in A$, the function $\widetilde{a}$ restricting the codomain $H_A$ to its image $\Im(\widetilde{a})=\{(g_{aM}, M): M\in\Max A\}$. Actually, the new $\widetilde{a}$ acts exactly like the previous one on the elements of the domain, so we shall use the same notation for them. Then, we have the bijective maps:
$$\begin{array}
{cccc}
\widetilde{a}:&\Max A &\longrightarrow & \Im(\widetilde{a})\\
 &M&\longmapsto & (g_{aM}, M)
\end{array}$$
and for each basic open set $\widehat{a}$ in $\Max A$ we have the open fuzzy set $\widetilde{a}^\to(\widehat{a})$ in $H_A$ satisfying
$$\widetilde{a}^\to(\widehat{a})(g_{aM},M)= \frac{a}{M}, \text{ for each } (g_{aM},M)\in \Im(\widetilde{a}).$$
%For each $a\in A$, the function $\widetilde{a}:\Max A \lto \Im(\widetilde{a})$ is continuous. In fact, let $\beta\in \tau_{\Im(\widetilde{a})}$ then $\beta\circ\widetilde{a}=\beta\restr\Im(\widetilde{a})=\beta\wedge\supp(\widetilde{a}^\to(\widehat{a})\in \tau_{H_A}$.

Now, let us consider the inverse of the graphic of $\widetilde{a}^\to(\widehat{a})$ given by
$$\mathfrak{a}:=G^{-1}(\widetilde{a}^\to(\widehat{a}))=\left\{\left(\frac{a}{M}, g_{aM}\right)\right\}_{M\in\Max A}.$$

\begin{definition}\label{MV-alg A representada}
  Let $\mathfrak{A}=\{\mathfrak{a}:a\in A\}$ . We define the structure $(\mathfrak{A},\oplus,^*,\mathfrak{o})$ with the operations and the constant defined as follow:
	
for each $\mathfrak{a}, \mathfrak{b}\in \mathfrak{A},$
\begin{enumerate}[(i)]
  \item $\mathfrak{o}:= G^{-1}(\widetilde{\0}^\to(\widehat{\0}))$
  \item $\mathfrak{a}\oplus \mathfrak{b}:=G^{-1}(\widetilde{a\oplus b}^\to(\widehat{a\oplus b}))$
  \item $\mathfrak{a}^* := G^{-1}(\widetilde{a^*}^\to(\widehat{a^*}))$.
\end{enumerate}
\end{definition}

\begin{theorem}
  $(\mathfrak{A},\oplus,^*,\mathfrak{o})$ is an MV-algebra.
\end{theorem}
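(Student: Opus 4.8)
The plan is to show that the structure $(\mathfrak{A}, \oplus, {}^*, \mathfrak{o})$ is isomorphic, as a set with operations, to the MV-algebra $A$ itself, via the obvious correspondence $a \mapsto \mathfrak{a}$. Once this bijection is shown to be an isomorphism, the MV-algebra axioms for $\mathfrak{A}$ follow immediately from those for $A$. This is the cleanest route, since verifying each MV-axiom directly on $\mathfrak{A}$ would require manipulating the lexicographic-product description of $G^{-1}(\widetilde{a}^\to(\widehat{a}))$ stalkwise, which is more laborious.

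First I would make explicit the identification underlying the definition. By Lemma \ref{lexrepr}, for each $M \in \Max A$ the quotient $A/O_M$ is isomorphic to $\Gamma(R_M \lex G(M/O_M), (1,0))$ via $a/O_M \mapsto (\widehat{a}(M), g_{aM})$. Hence the element $\mathfrak{a} = \{(a/M, g_{aM})\}_{M \in \Max A}$ records, coordinate by coordinate, precisely the image of $a$ under the subdirect embedding of Remark \ref{subemb} into $\prod_{M \in \Max A} A/O_M$, read through these isomorphisms. In other words, the map $\Phi : A \to \mathfrak{A}$, $a \mapsto \mathfrak{a}$, is the composite of that subdirect embedding with a coordinatewise isomorphism, so it is injective; it is surjective onto $\mathfrak{A}$ by the very definition of $\mathfrak{A} = \{\mathfrak{a} : a \in A\}$.

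Next I would check that $\Phi$ transports the operations. By Definition \ref{MV-alg A representada}, we have $\mathfrak{o} = \Phi(\0)$, $\mathfrak{a} \oplus \mathfrak{b} = \Phi(a \oplus b)$, and $\mathfrak{a}^* = \Phi(a^*)$ \emph{by construction}: the operations on $\mathfrak{A}$ were defined precisely so that $\Phi(a) \oplus \Phi(b) := \Phi(a \oplus b)$, etc. The only thing requiring genuine argument is that these operations are \emph{well-defined}, i.e., independent of the representative $a$ chosen for a given $\mathfrak{a}$. This reduces to showing that $\Phi$ is injective, so that $\mathfrak{a} = \mathfrak{a}'$ forces $a = a'$ and hence $a \oplus b = a' \oplus b$; injectivity of $\Phi$ is exactly the subdirect-embedding property already cited. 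With well-definedness secured, $\Phi$ is by design an isomorphism of the algebraic structures, and therefore $(\mathfrak{A}, \oplus, {}^*, \mathfrak{o})$ satisfies every MV-algebra identity because $A$ does.

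The main obstacle I expect is precisely the well-definedness/injectivity step: one must verify that distinct elements $a, a' \in A$ with $\widehat{a}(M) = \widehat{a'}(M)$ and $g_{aM} = g_{a'M}$ for all $M$ genuinely coincide. This is where Remark \ref{subemb} (the subdirect embedding of $A$ into $\prod_{M} A/O_M$, whose intersection of kernels $\bigcap_{M} O_M = \{0\}$) together with the stalkwise isomorphism of Lemma \ref{lexrepr} does the real work, since the pair $(\widehat{a}(M), g_{aM})$ is a faithful encoding of $a/O_M$. I would therefore isolate and prove the identity $\mathfrak{a} = \mathfrak{b} \iff a/O_M = b/O_M$ for all $M \iff a = b$ as the key lemma, after which the verification of the MV-axioms is a purely formal transfer along the bijection $\Phi$ and needs no separate computation.
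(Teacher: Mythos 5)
Your proposal is correct, but it takes a genuinely different route from the paper. The paper's own proof verifies each of the six defining identities (MV1)--(MV6) of \cite[Definition 1.1.1]{mvbook} by direct stalkwise computation: for instance, associativity of $\oplus$ on $\mathfrak{A}$ is reduced, coordinate by coordinate over $\Max A$, to associativity of $\oplus$ in $A$ via the identity $\mathfrak{a}\oplus\mathfrak{b}=\left\lbrace\left(\frac{a\oplus b}{M},g_{(a\oplus b)M}\right)\right\rbrace_{M\in\Max A}$, and similarly for the other axioms. You instead prove first that $a\mapsto\mathfrak{a}$ is a bijection onto $\mathfrak{A}$ --- injectivity coming from $\bigcap_{M\in\Max A}O_M=\{0\}$ together with the stalkwise faithfulness of the encoding $(\widehat{a}(M),g_{aM})$ from Lemma \ref{lexrepr} --- and then transport the MV-structure of $A$ along this bijection. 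This is essentially the content of the paper's subsequent Theorem \ref{repr}, front-loaded: the argument you isolate as your ``key lemma'' ($\mathfrak{a}=\mathfrak{b}$ iff $a/O_M=b/O_M$ for all $M$ iff $a=b$) is verbatim the injectivity step of $\Psi$ there. Your route has the merit of making explicit the well-definedness of the operations of Definition \ref{MV-alg A representada}, which are given in terms of representatives and which the paper's direct verification leaves implicit until Theorem \ref{repr}; the cost is that the present theorem and the representation theorem collapse into a single statement rather than being established in two independent steps. Both arguments are sound.
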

\begin{proof}
Let us see that $\mathfrak{A}$ satisfies the equations defining MV-algebras, as listed in \cite[Definition 1.1.1]{mvbook}.
%First, we show that $(\mathfrak{A},\oplus,\mathfrak{o})$ is a commutative monoid.
  \begin{enumerate}[(MV1)]
          \item \begin{eqnarray*}(\mathfrak{a}\oplus \mathfrak{b})\oplus \mathfrak{c}= \left\lbrace\left(\frac{(a\oplus b)\oplus c}{M},g_{((a\oplus b)\oplus c) M}\right)\right\rbrace_{M\in\Max A}\\=\left\lbrace\left(\frac{a\oplus (b\oplus c)}{M},g_{(a\oplus (b\oplus c)) M}\right)\right\rbrace_{M\in\Max A}=
  \mathfrak{a}\oplus (\mathfrak{b}\oplus \mathfrak{c})\end{eqnarray*}
          \item $$\mathfrak{a}\oplus \mathfrak{b}= \left\lbrace\left(\frac{a\oplus b}{M},g_{(a\oplus b) M}\right)\right\rbrace_{M\in\Max A}=\\=\left\lbrace\left(\frac{b\oplus a}{M},g_{(b\oplus a) M}\right)\right\rbrace_{M\in\Max A}= \mathfrak{b}\oplus \mathfrak{a}$$
          \item \begin{eqnarray*}\mathfrak{a}\oplus \mathfrak{o}= \left\lbrace\left(\frac{a\oplus \0}{M},g_{(a\oplus \0) M}\right)\right\rbrace_{M\in\Max A}=\left\lbrace\left(\frac{a}{M},g_{a M}\right)\right\rbrace_{M\in\Max A}= \mathfrak{a}\end{eqnarray*}
              \item \begin{eqnarray*}(\mathfrak{a^*})^*=\left(\left\lbrace\left(\frac{a^*}{M},g_{(a^*) M}\right)\right\rbrace_{M\in\Max A}\right)^*=\left\lbrace\left(\frac{(a^*)^*}{M},g_{(a^*)^* M}\right)\right\rbrace_{M\in\Max A}\\= \left\lbrace\left(\frac{a}{M},g_{a M}\right)\right\rbrace_{M\in\Max A}= \mathfrak{a}\end{eqnarray*}
  \item \begin{equation*}
  \begin{split}\mathfrak{a}\oplus \mathfrak{o}^* & = \left\lbrace\left(\frac{a}{M},g_{a M}\right)\right\rbrace_{M\in\Max A}\oplus \left\lbrace\left(\frac{\0^*}{M},g_{0^* M}\right)\right\rbrace_{M\in\Max A}\\ & =\left\lbrace\left(\frac{a\oplus \0^*}{M},g_{(a\oplus \0^*) M}\right)\right\rbrace_{M\in\Max A}=\left\lbrace\left(\frac{\0^*}{M},g_{\0^* M}\right)\right\rbrace_{M\in\Max A}\\ & = \mathfrak{o}^*.
  \end{split}
  \end{equation*}
  \item \begin{equation*}
  \begin{split}
  (\mathfrak{a}^*\oplus \mathfrak{b})^*\oplus \mathfrak{b} &=\\&= \left\lbrace\left(\frac{(a^*\oplus b)^*}{M},g_{(a^* \oplus b)^*M}\right)\right\rbrace_{M\in\Max A}\oplus \left\lbrace\left(\frac{b}{M},g_{b M}\right)\right\rbrace_{M\in\Max A}\\&=\left\lbrace\left(\frac{(a^*\oplus b)^*\oplus b}{M},g_{((a^* \oplus b)^*\oplus b)M}\right)\right\rbrace_{M\in\Max A}\\&= \left\lbrace\left(\frac{(b^*\oplus a)^*\oplus a}{M},g_{((b^* \oplus a)^*\oplus a)M}\right)\right\rbrace_{M\in\Max A}\\&=\left\lbrace\left(\frac{(b^*\oplus a)^*}{M},g_{(b^* \oplus a)^*M}\right)\right\rbrace_{M\in\Max A}\oplus \left\lbrace\left(\frac{a}{M},g_{a M}\right)\right\rbrace_{M\in\Max A}\\&=(\mathfrak{b}^*\oplus \mathfrak{a})^*\oplus \mathfrak{a}.
  \end{split}
  \end{equation*}
\end{enumerate}
\end{proof}

\begin{theorem}\label{repr}
The MV-algebras $A$ and $\mathfrak{A}$ are isomorphic. So, any locally retractive MV-algebra is isomorphic to the algebra of global sections of an MV-sheaf of Abelian $\ell$-groups.
\end{theorem}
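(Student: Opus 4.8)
The plan is to prove that the assignment $\Phi\colon a\mapsto\mathfrak{a}$ is an MV-algebra isomorphism from $A$ onto $\mathfrak{A}$; the second assertion of the theorem will then follow at once, since $\mathfrak{A}$ is by construction the algebra of global sections of the MV-sheaf space $(H_A,\pi,\Max A)$ of Proposition \ref{shsp}, whose stalks are the Abelian $\ell$-groups $G(M/O_M)$. Surjectivity of $\Phi$ is immediate from the very definition $\mathfrak{A}=\{\mathfrak{a}:a\in A\}$, and the homomorphism property is built into Definition \ref{MV-alg A representada}: there one sets $\mathfrak{o}=\Phi(0)$, $\Phi(a\oplus b)=\mathfrak{a}\oplus\mathfrak{b}$, and $\Phi(a^*)=\mathfrak{a}^*$, so $\Phi$ preserves the operations by fiat. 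Consequently the whole weight of the theorem rests on the injectivity of $\Phi$, which at the same time retroactively guarantees that the operations of Definition \ref{MV-alg A representada} are well defined.

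For injectivity I would read the representing family $\mathfrak{a}=\{(a/M,g_{aM})\}_{M\in\Max A}$ componentwise. By Lemma \ref{lexrepr}, for each $M\in\Max A$ the pair $(a/M,g_{aM})=(\widehat{a}(M),g_{aM})$ is precisely the image of $a/O_M$ under the isomorphism $A/O_M\cong\Gamma(R_M\lex G(M/O_M),(1,0))$. Since an isomorphism is injective on each component, $\mathfrak{a}=\mathfrak{b}$ holds if and only if $a/O_M=b/O_M$ for every $M\in\Max A$, that is, if and only if $a$ and $b$ have the same image under the canonical map $A\lto\prod_{M\in\Max A}A/O_M$. By Remark \ref{subemb} this map is a subdirect embedding, its kernel being $\bigcap_{M\in\Max A}O_M=\{0\}$; hence $a=b$, and $\Phi$ is injective.

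Putting the three facts together, $\Phi$ is a bijective MV-homomorphism, so $A\cong\mathfrak{A}$. Finally, each $\widetilde{a}\colon\Max A\lto H_A$, $M\mapsto(g_{aM},M)$, is a global section of the MV-sheaf space of Proposition \ref{shsp} (indeed $\pi\circ\widetilde{a}=\id_{\Max A}$), and $\mathfrak{a}=G^{-1}(\widetilde{a}^\to(\widehat{a}))$ merely re-encodes this section; thus $\mathfrak{A}$ is the algebra of global sections of an MV-sheaf of Abelian $\ell$-groups, and $A$ is isomorphic to it. I expect no genuine obstacle here: the only substantive step is the injectivity argument, whose crux is the recognition — supplied by Lemma \ref{lexrepr} — that equality of the representing families is equivalent to equality in the subdirect product $\prod_{M}A/O_M$, after which Remark \ref{subemb} closes the proof.
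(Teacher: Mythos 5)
Your proposal is correct and follows essentially the same route as the paper's own proof: surjectivity and the homomorphism property are read off from Definition \ref{MV-alg A representada}, and injectivity is reduced, via the identification of $(a/M,g_{aM})$ with $a/O_M$ from Lemma \ref{lexrepr}, to the fact that $\bigcap_{M\in\Max A}O_M=\{0\}$ makes $A\lto\prod_{M\in\Max A}A/O_M$ a (subdirect) embedding. Your explicit invocation of Lemma \ref{lexrepr} and Remark \ref{subemb} merely makes transparent what the paper leaves implicit in its ``that is, $a/O_M=b/O_M$'' step.
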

\begin{proof}
The natural map $\Psi:a \in A \lmapsto \mathfrak{a} \in \mathfrak{A}$ preserves the operations $\oplus, ^*$, and the constant $\0$. Indeed, by Definition \ref{MV-alg A representada}, for each $a,b\in A$, $\Psi(a\oplus b)=G^{-1}(\widetilde{a\oplus b}^\to(\widehat{a\oplus b}))=\mathfrak{a}\oplus \mathfrak{b}=\Psi(a)\oplus \Psi(b)$, and $\Psi(a^*)= \mathfrak{a}^*=(\Psi(a))^*$. Analogously, we have that $\Psi(\0)=\mathfrak{o}$. It is clear that $\Psi$ is a surjection, and let us prove that $\Psi$ is injective. Let $a,b\in A$, and suppose that $\Psi(a)=\Psi(b)$, that is,
$$\left\{\left(\frac{a}{M}, g_{aM}\right)\right\}_{M\in\Max A}= \left\{\left(\frac{b}{M}, g_{bM}\right)\right\}_{M\in\Max A},$$ then for each $M\in\Max A$,

$$\left(\frac{a}{M}, g_{aM}\right)=\left(\frac{b}{M}, g_{bM}\right),$$ that is, $\frac{a}{O_M}=\frac{b}{O_M}$ for every $M\in\Max A$, then $d(a,b)\in O_M$ for every $M\in\Max A$. So $a=b$ because $\bigcap\{O_M:M\in\Max A\}=0$.
\end{proof}

%\section{Conclusion}
%\label{concsec}

Theorem \ref{repr} gives us a complete representation of locally retractive MV-algebras. On the other hand, thanks to \cite[Theorem 4.5]{dinespger} and Lemma \ref{prodmvl}, we obtain the following two immediate corollaries.

\begin{corollary}\label{retremb}
Every MV-algebra $A$ is isomorphic to a subalgebra of an algebra with retractive radical.
\end{corollary}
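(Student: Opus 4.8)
The plan is to combine the subdirect representation of $A$ by local quotients (Remark \ref{subemb}) with an embedding of each such quotient into a local algebra having retractive radical, and then to pass to the product. First I would invoke Remark \ref{subemb} to obtain a subdirect embedding $A \hookrightarrow \prod_{M \in \Max A} A/O_M$; by Proposition \ref{O_P is primary}, each $O_M$ is primary, so every factor $A/O_M$ is a local MV-algebra. The crucial external input is then \cite[Theorem 4.5]{dinespger}, which guarantees that every local MV-algebra embeds into a local MV-algebra with retractive radical. Applying this factorwise, for each $M \in \Max A$ I would fix an embedding $\lambda_M : A/O_M \hookrightarrow B_M$ with $B_M$ local and with retractive radical.

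Taking the product of these embeddings yields $\prod_M \lambda_M : \prod_M A/O_M \hookrightarrow \prod_M B_M =: B$, and composing with the subdirect embedding gives $A \hookrightarrow B$. It then remains to check that $B$ has retractive radical. The key observation is that the radical commutes with products, i.e. $\Rad B = \prod_M \Rad B_M$: this follows from the elementwise characterisation of the radical (an element lies in the radical precisely when all of its finite multiples are below its negation), a condition holding in a product exactly when it holds coordinatewise. Consequently $B/\Rad B \cong \prod_M (B_M/\Rad B_M)$, and the coordinatewise retractions $\delta_M : B_M/\Rad B_M \to B_M$, which exist because each $B_M$ has retractive radical, assemble into a single homomorphism $\delta = \prod_M \delta_M$ splitting the projection $B \to B/\Rad B$, since $p \circ \delta = \prod_M(p_M \circ \delta_M) = \id$. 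Hence $B$ has retractive radical and $A$ embeds into it. Lemma \ref{prodmvl} records the same phenomenon from the viewpoint of local retractivity, namely that for a product of its own local quotients the properties of being locally retractive and of having retractive radical coincide.

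The step I expect to carry the weight is precisely the identity $\Rad B = \prod_M \Rad B_M$ together with the assembly of the local splittings $\delta_M$ into a global one: once the elementwise description of the radical is in place both become routine, but they are exactly what legitimises the passage from the factors to the product. As an alternative I would note that one may run the same argument starting from the subdirect embedding into $\prod_{P \in \Spec A} A/P$, each factor now being a chain, and embed each chain into a local algebra with retractive radical; the product step is identical. In either formulation the statement is an immediate consequence of \cite[Theorem 4.5]{dinespger} and the product-stability of retractive radicals.
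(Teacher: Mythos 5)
Your proof is correct and follows essentially the same route as the paper: subdirectly embed $A$ into $\prod_{M \in \Max A} A/O_M$ (each factor local because $O_M$ is primary), embed each factor into a local algebra $B_M$ with retractive radical via \cite[Theorem 4.5]{dinespger}, and pass to the product $B = \prod_M B_M$. The only divergence is at the last step, where you verify directly that $B$ has retractive radical via $\Rad B = \prod_M \Rad B_M$ and the assembly of the coordinatewise splittings, while the paper cites Lemma \ref{prodmvl}; your inlined argument is if anything the more transparent justification, since Lemma \ref{prodmvl} as stated concerns an algebra isomorphic to the product of its \emph{own} quotients $A/O_M$, and what is really being used is exactly the product-stability of retractive radicals that you prove.
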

\begin{proof}
Since $\bigcap_{M \in \Max A} O_M = \{0\}$, $A$ is subdirectly embeddable in $\prod_{M \in \Max A} A/O_M$, and $A/O_M$ is a local MV-algebra, for all $M \in \Max A$, because the $O_M$'s are primary ideals. By Theorem 4.5 of \cite{dinespger}, each $A/O_M$ can be embedded in a local MV-algebra with retractive radical $B_M$, hence $\prod_{M \in \Max A} A/O_M$ is embeddable in the algebra $ B = \prod_{M \in \Max A} B_M$ which, on its turn, has retractive radical by Lemma \ref{prodmvl}. The assertion follows.
\end{proof}

It is worth noticing that Corollary \ref{retremb} could be obtained also as an immediate consequence of Di Nola's Representation Theorem \cite{din}, since powers of ultrapowers of $[0,1]$ have retractive radical. Nonetheless, the way such an embedding is obtained in our proof, makes the embedding of the following result much clearer; moreover, the embeddability of any MV-algebra in an algebra with retractive radical has never been explicitly stated before, to the best of our knowledge.

\begin{corollary}\label{sheafemb}
Every MV-algebra $A$ can be embedded in the algebra of global sections of an MV-sheaf of Abelian $\ell$-groups.
\end{corollary}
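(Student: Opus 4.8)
The plan is to chain together the two results that immediately precede this corollary, namely Corollary \ref{retremb} and Theorem \ref{repr}. The statement to be proved is that every MV-algebra $A$ embeds into the algebra of global sections of an MV-sheaf of Abelian $\ell$-groups, and both ingredients are already in place: the first produces an embedding into an algebra with retractive radical, and the second represents a suitable class of algebras as global sections of exactly such a sheaf.

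First I would invoke Corollary \ref{retremb} to obtain an embedding $\varphi: A \hookrightarrow B$, where $B$ has retractive radical. The next step is to pass from ``retractive radical'' to ``locally retractive'', which is precisely the content of Theorem \ref{mvrthm}: any MV-algebra with retractive radical is locally retractive, so $B \in \MVl$. This is the key bridge, and it is why Theorem \ref{mvrthm} was singled out in the introduction as playing a crucial role. With $B \in \MVl$ established, I would then apply Theorem \ref{repr}, which asserts that every locally retractive MV-algebra is isomorphic to the algebra of global sections $\mathfrak{B}$ of an MV-sheaf of Abelian $\ell$-groups; call this isomorphism $\psi: B \xrightarrow{\sim} \mathfrak{B}$.

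Finally, composing the two maps yields the desired embedding: $\psi \circ \varphi: A \hookrightarrow \mathfrak{B}$ is injective, being a composite of an embedding and an isomorphism, and $\mathfrak{B}$ is by construction the algebra of global sections of an MV-sheaf of Abelian $\ell$-groups. This establishes the claim.

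I do not expect any genuine obstacle here, since the corollary is a formal consequence of the machinery already developed; the only point requiring care is to confirm that the algebra $B$ supplied by Corollary \ref{retremb} actually lands in the hypothesis class of Theorem \ref{repr}, which is guaranteed by Theorem \ref{mvrthm}. In fact, one could streamline the argument by observing that $B = \prod_{M \in \Max A} B_M$ was already shown to have retractive radical via Lemma \ref{prodmvl} inside the proof of Corollary \ref{retremb}, so the invocation of Theorem \ref{mvrthm} makes the passage to $\MVl$ completely transparent.
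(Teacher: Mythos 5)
Your proposal is correct and follows essentially the same route as the paper: the paper's own proof simply notes that $A$ embeds in the algebra $B$ constructed in the proof of Corollary \ref{retremb}, which (having retractive radical, hence being locally retractive by Theorem \ref{mvrthm}) is isomorphic to the algebra of global sections $\mathfrak{B}$ via Theorem \ref{repr}. You have merely made explicit the intermediate appeal to Theorem \ref{mvrthm} that the paper leaves implicit.
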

\begin{proof}
With reference to the previous proof, $A$ embeds in $B$ which is isomorphic to the algebra of global section $\mathfrak{B}$ as in Definition \ref{MV-alg A representada}.
\end{proof}

\section*{Compliance with Ethical Standards}
\begin{itemize}
\item Funding: This work was supported by Colciencias Ph.D. scholarship doctoral scholarship “Doctorado Nacional-567” (Luz Victoria De La Pava) and FAPESB Grant No. APP0072/2016 (Ciro Russo).
\item Conflict of Interest: Luz Victoria De La Pava declares that she has no conflict of interest. Ciro Russo declares that he has no conflict of interest.
\item Ethical approval: This article does not contain any studies with human participants or animals performed by any of the authors.
\end{itemize}

\end{document}